\documentclass[11pt]{amsart}
\usepackage[T1]{fontenc}
\usepackage[utf8]{inputenc}
\usepackage{lmodern}
\usepackage{microtype}
\usepackage{xcolor}
\usepackage{amsmath,amssymb,amsthm,mathtools}
\usepackage{enumitem}
\usepackage[margin=1in]{geometry}
\usepackage{hyperref}
\usepackage{mathrsfs}
\hypersetup{colorlinks=true,linkcolor=blue,citecolor=blue,urlcolor=blue,
	pdftitle={Central Haagerup Tensor Products and Completely Bounded Maps under Strong Morita Equivalence},
	pdfauthor={Ilja Gogic}}

\theoremstyle{plain}
\newtheorem{theorem}{Theorem}[section]
\newtheorem{lemma}[theorem]{Lemma}
\newtheorem{proposition}[theorem]{Proposition}
\newtheorem{corollary}[theorem]{Corollary}
\theoremstyle{remark}
\newtheorem{remark}[theorem]{Remark}

\numberwithin{equation}{section}

\newcommand{\N}{\mathbb{N}}
\newcommand{\C}{\mathbb{C}}
\newcommand{\Link}{\mathscr{L}}
\newcommand{\cb}{\mathrm{cb}}
\newcommand{\M}{\operatorname{M}}
\newcommand{\cspan}{\overline{\operatorname{span}}}
\DeclareMathOperator{\CB}{CB}
\DeclareMathOperator{\Id}{Id}
\DeclareMathOperator{\Prim}{Prim}
\DeclareMathOperator{\Glimm}{Glimm}
\DeclareMathOperator{\len}{len}

\title[Central Haagerup Tensor Products under Morita Equivalence]{Central Haagerup Tensor Products and Completely Bounded Maps under Strong Morita Equivalence}

\author{Ilja Gogi\'{c}}

\address{I.~Gogi\'c, University of Zagreb Faculty of Science, Department of Mathematics, Bijeni\v{c}ka 30, 10000 Zagreb, Croatia}

\email{ilja@math.hr, ilja.gogic@math.pmf.unizg.hr}

\subjclass[2020]{Primary 46L07; Secondary 46L05, 46L06, 46L08, 47B47}

\keywords{central Haagerup tensor product, completely bounded map, elementary operator, strong Morita equivalence, imprimitivity bimodule, Glimm ideal, primal ideal}

\date{September 29, 2026.}

\begin{document}
	
	\begin{abstract}
		Let $A$ and $B$ be strongly Morita equivalent $C^*$-algebras, implemented by a nonzero imprimitivity bimodule $X={}_AX_B$ with its canonical operator-space structure. Denote by $Z_X$ their common multiplier centre, canonically identified through $X$. The coefficient action gives a complete contraction $\Theta_X:A\otimes_{Z_X,h}B\to\CB(X)$, where $A\otimes_{Z_X,h}B$ is the central Haagerup tensor product obtained by balancing over $Z_X$ and $\CB(X)$ is the space of completely bounded maps on $X$. We show that $\Theta_X$ is injective exactly when every Glimm ideal of $A$ is $2$-primal, and isometric, equivalently completely isometric, exactly when every such ideal is primal. For each positive integer $\ell$, norm preservation for sums of at most $\ell$ elementary tensors is equivalent to $(\ell^2+1)$-primality of every Glimm ideal. These conditions may equivalently be imposed on $B$. This extends the corresponding theorems of Somerset and Archbold--Somerset--Timoney to imprimitivity bimodules. We also show that, after matrix stabilization, the optimal constants bounding the central Haagerup norm by the completely bounded norm coincide for $X$, $A$ and $B$, at every finite tensor length and on the completed products, and agree with the corresponding constants after compact stabilization.
	\end{abstract}
	
	\maketitle
	
	\section{Introduction}\label{sec:introduction}
	
	A recurring problem in the theory of elementary operators on $C^*$-algebras is to determine when the Haagerup norm of a coefficient tensor equals the completely bounded norm of the operator it induces. The answer is closely related to the ideal structure of the underlying algebra. 
	
	Throughout, all ideals of $C^*$-algebras are understood to be closed and two-sided. We write $\N$ for the set of positive integers and $\odot$ for the algebraic tensor product. For a $C^*$-algebra $C$, let $\M(C)$ denote its multiplier algebra. Each tensor
	\[
	u=\sum_{j=1}^r a_j\otimes b_j\in\M(C)\odot\M(C),
	\qquad r\in\N,
	\]
	determines an \emph{elementary operator}
	\[
	T_u:C\to C,\qquad
	T_u(x):=\sum_{j=1}^r a_jxb_j.
	\]
	Its completely bounded norm is dominated by the \emph{Haagerup norm}
	\begin{equation}\label{eq:haagerup-norm}
		\|u\|_h:=
		\inf\left\{
		\left\|\sum_{j=1}^s c_jc_j^*\right\|^{1/2}
		\left\|\sum_{j=1}^s d_j^*d_j\right\|^{1/2}:
		u=\sum_{j=1}^s c_j\otimes d_j
		\right\},
	\end{equation}
	where the infimum ranges over all $s\in\N$ and $c_j,d_j\in\M(C)$, $1\le j\le s$. The completion of $\M(C)\odot\M(C)$ in this norm is the \emph{Haagerup tensor product} $\M(C)\otimes_h\M(C)$. With the canonical operator-space structures, the map $u\mapsto T_u$ extends to a complete contraction
	\begin{equation}\label{eq:canonical}
		\M(C)\otimes_h\M(C)\to\CB(C),
	\end{equation}
	where, for an operator space $E$, $\CB(E)$ denotes the space of completely
	bounded linear maps on $E$.
	
	Haagerup proved that \eqref{eq:canonical} is isometric for $C=\mathcal K(H)$ and hence also for $C=\mathcal B(H)$, where $\mathcal K(H)$ and $\mathcal B(H)$ denote the algebras of compact and bounded operators on a Hilbert space $H$, respectively \cite{Haagerup1980}; see \cite[Theorem~5.4.7 and Corollary~5.4.9]{AraMathieu2003}. Smith independently proved the same result for $\mathcal K(H)$ \cite[Theorem~4.3]{Smith1991}. Chatterjee and Sinclair subsequently proved that \eqref{eq:canonical} is isometric for every von Neumann factor with separable predual \cite[Theorem~3]{ChatterjeeSinclair1992}. The characterization was completed by Mathieu, who proved that, for every nonzero $C^*$-algebra $C$, primeness of $C$ is equivalent to injectivity of the algebraic coefficient map $\M(C)\odot\M(C)\to\CB(C)$, $u\mapsto T_u$, and also to isometry of its continuous extension \eqref{eq:canonical}; see \cite[Corollary~4.4]{Mathieu1989} and \cite[Proposition~5.4.11]{AraMathieu2003}, respectively.
	
	For a nonprime $C^*$-algebra, the multiplier centre need not be trivial. For a $C^*$-algebra $C$, we write $Z_C:=Z(\M(C))$ for this centre. If $a,b\in C$ and $z\in Z_C$, then the tensors $az\otimes b$ and $a\otimes zb$ induce the same elementary operator. This leads naturally to balancing over $Z_C$. Following \cite[Definition~3.1]{ArchboldSomersetTimoney2009}, define the \emph{central Haagerup tensor product} $C\otimes_{Z_C,h}C$ as the quotient of $C\otimes_h C$ by the closed linear span of all elements of the form $az\otimes b-a\otimes zb$, where $a,b\in C$ and $z\in Z_C$. Denoting by $a\otimes_{Z_C}b$ the image of $a\otimes b$ in this quotient, the coefficient action induces a complete contraction
	\[
	\theta_C:C\otimes_{Z_C,h}C\to\CB(C),\qquad
	\theta_C(a\otimes_{Z_C}b)(x):=axb,
	\]
	where $a,b,x\in C$. Here the tensor factors are $C$, rather than $\M(C)$ as in \eqref{eq:canonical}. This distinction is important in the nonunital case, as explained in Remark~\ref{rem:scope}.
	
	Chatterjee and Smith introduced the central Haagerup tensor product for unital $C^*$-algebras and proved that $\theta_C$ is isometric for von Neumann algebras and for unital $C^*$-algebras $C$ with Hausdorff primitive ideal space $\Prim(C)$ \cite[Theorems~2.4 and~3.1]{ChatterjeeSmith1993}. Ara and Mathieu obtained the analogous isometry result, with multiplier coefficients, for boundedly centrally closed $C^*$-algebras \cite[Theorem~3.7]{AraMathieu1994}, while Magajna gave another proof of the von Neumann algebra case using an injectivity theorem for module Haagerup tensor products \cite[Theorem~2.2 and Corollary~2.4]{Magajna1995}.
	
	The exact injectivity and isometry criteria are formulated in terms of Glimm and $n$-primal ideals, recalled in Section~\ref{subsec:cstar-facts}. For unital $C^*$-algebras, Somerset characterized injectivity of $\theta_C$ and proved that $\theta_C$ is isometric whenever every Glimm ideal is primal \cite[Corollary~6(ii) and Theorem~4]{Somerset1998}. Archbold, Somerset and Timoney subsequently proved the converse and established the exact bounded-length criterion \cite[Theorems~8 and~17]{ArchboldSomersetTimoney2005}. Their later work extended the injectivity, isometry and bounded-length criteria to arbitrary $C^*$-algebras \cite[Theorems~3.8 and~3.9]{ArchboldSomersetTimoney2009}. Somerset asked whether, for a unital $C^*$-algebra $C$, $\theta_C$ is completely isometric whenever every Glimm ideal of $C$ is primal \cite[p.~119]{Somerset1998}. By \cite[Remark~4.2]{ArchboldSomersetTimoney2009}, this hypothesis also implies that $\theta_{M_n(C)}$ is isometric for every $n\in\N$, where $M_n(C)$ denotes the $C^*$-algebra of $n\times n$ matrices over $C$. Using these matrix-level isometries and a column--row argument, Proposition~\ref{prop:complete-isometry} answers Somerset's question affirmatively and proves that, for every nonzero $C^*$-algebra $C$, isometry of $\theta_C$ implies complete isometry.
	
	The aim of this paper is to extend these single-algebra results to strong Morita equivalence. Let $A$ and $B$ be strongly Morita equivalent $C^*$-algebras in the sense of Rieffel \cite{Rieffel1974}, and let $X={}_AX_B$ be a nonzero imprimitivity bimodule implementing the equivalence, equipped with its canonical operator-space structure. The bimodule $X$ induces a canonical identification of the multiplier centres of $A$ and $B$, compatible with the Rieffel correspondence; see Section~\ref{subsec:ideals}. We therefore write
	\[
	Z_X:=Z(\M(A))=Z(\M(B))
	\]
	under this identification. As in the single-algebra case, we define the \emph{central Haagerup tensor product} of $A$ and $B$ by balancing over the common multiplier centre $Z_X$:
	\[
	A\otimes_{Z_X,h}B:=
	(A\otimes_hB)\Big/\cspan\{az\otimes b-a\otimes zb:
	a\in A,\ b\in B,\ z\in Z_X\},
	\]
	where $\cspan$ denotes closed linear span. For $a\in A$ and $b\in B$, write $a\otimes_{Z_X}b$ for the image of $a\otimes b$ in this quotient. Let $A\odot_{Z_X}B$ denote the image of $A\odot B$ in $A\otimes_{Z_X,h}B$, and write $\|\cdot\|_{Z_X,h}$ for the quotient
	norm. The left and right module actions induce the completely contractive
	\emph{coefficient map} associated with $X$:
	\begin{equation}\label{eq:theta-X}
		\Theta_X:A\otimes_{Z_X,h}B\to\CB(X),\qquad \Theta_X(a\otimes_{Z_X}b)(x):=axb,
	\end{equation}
	where $a\in A$, $b\in B$ and $x\in X$. For $0\ne u\in A\odot_{Z_X}B$, define its \emph{tensor length} by
	\[
	\len(u):=\min\left\{r\in\N:
	u=\sum_{j=1}^r a_j\otimes_{Z_X}b_j
	\text{ for some }a_j\in A,\ b_j\in B,\ 1\le j\le r\right\},
	\]
	and set $\len(0):=0$. The same convention applies to ordinary algebraic tensor products. When $X=A=B$, with the usual multiplication actions, $\Theta_X=\theta_A$.
	
	We can now state the main result of the paper.
	\begin{theorem}\label{thm:main}
		Let $A$ and $B$ be strongly Morita equivalent $C^*$-algebras, and let $X={}_AX_B$ be a nonzero imprimitivity bimodule implementing the equivalence. Then:
		\begin{enumerate}[label=\textup{(\roman*)}]
			\item $\Theta_X$ is injective if and only if every Glimm ideal of $A$ is $2$-primal.
			\item For each $\ell\in\N$, $\Theta_X$ preserves the norm of every $u\in A\odot_{Z_X}B$ with $\len(u)\le\ell$ if and only if every Glimm ideal of $A$ is $(\ell^2+1)$-primal.
			\item $\Theta_X$ is isometric if and only if it is completely isometric, and these conditions hold if and only if every Glimm ideal of $A$ is primal.
		\end{enumerate}
		Each ideal condition may equivalently be imposed on $B$. Consequently, injectivity, norm preservation at each fixed tensor length, isometry and complete isometry of $\Theta_X$ depend only on the strong Morita equivalence class, not on the particular implementing bimodule.
	\end{theorem}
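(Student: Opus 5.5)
The plan is to reduce everything to the single-algebra results for the linking algebra $\Link=\Link(X)=\begin{pmatrix}A & X\\ \widetilde X & B\end{pmatrix}$ and then transfer the known criteria of Somerset and Archbold--Somerset--Timoney \cite[Theorems~3.8 and~3.9]{ArchboldSomersetTimoney2009}, together with Proposition~\ref{prop:complete-isometry}, to $\Theta_X$.

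\textbf{Ideal side.} By the Rieffel correspondence, $A$, $B$ and $\Link$ have canonically homeomorphic primitive ideal spaces, and the centres $Z(\M(A))$, $Z(\M(B))$, $Z(\M(\Link))$ are identified with $Z_X$. I will check that Glimm ideals correspond, since they are determined by $\Prim$ together with the complete regularization by $C_b(\Prim)$. I will also check that $n$-primality corresponds, since it is a statement about $n$-tuples of ideals with zero product having a member contained in a given ideal, and the Rieffel lattice isomorphism respects products: $I\mapsto I\cdot X$ satisfies $(I_1\cdots I_n)X=\dots$, and $I_1\cdots I_n=0$ iff the product of the corresponding ideals is $0$. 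This gives the final sentences of the theorem, namely the symmetry between $A$ and $B$.

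\textbf{Analytic side.} I will embed $A\otimes_{Z_X,h}B$ into $\Link\otimes_{Z_\Link,h}\Link$ via $a\otimes b\mapsto e_{11}a\otimes be_{22}$, where $e_{11},e_{22}\in\M(\Link)$ are the corner projections. The key claim is that this map is isometric at each tensor length and on completions. Injectivity of the Haagerup tensor product on corners is standard: compressing by $e_{11}\otimes e_{22}$ is a complete contraction $\Link\otimes_h\Link\to e_{11}\Link\otimes_h\Link e_{22}=A\otimes_h B$. The map also commutes with the central balancing, because $Z_\Link$ acts diagonally by $z\mapsto (z|_A,z|_B)$. Hence the compression descends to a contractive left inverse on the quotients, and tensor length is preserved. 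Conversely, $\theta_\Link(e_{11}a\otimes be_{22})$ maps $\Link$ to $e_{11}\Link e_{22}=X$ and kills the other corners. Its restriction to $X$ is $\Theta_X(u)$, and its cb norm equals $\|\Theta_X(u)\|_\cb$, since $x\mapsto e_{11}xe_{22}$ is a complete contraction of $\Link$ onto $X$. Thus $\Theta_X$ is identified with the restriction of $\theta_\Link$ to a corner, isometrically on both domain and codomain. The same identification holds at matrix levels, using $M_n(\Link)$ as the linking algebra of $M_n(X)$.

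\textbf{Transfer of the criteria.} The "if" directions of (i)--(iii) now follow from the corresponding results for $\Link$ and Proposition~\ref{prop:complete-isometry}. The "only if" directions are the main obstacle, because the failure witnesses for $\theta_\Link$ built in \cite{ArchboldSomersetTimoney2009} need not live in the $(1,2)$ corner. To handle this, I will take a witness $u=\sum_{j\le\ell}c_j\otimes d_j$ for $\theta_\Link$, which is non-injective, or has norm drop at length $\ell$. I will then compress to $\sum_j e_{11}c_jx\otimes y d_je_{22}$ for suitable $x\in X$, $y\in\widetilde X$ drawn from approximate units, namely $\sum \langle\cdot,\cdot\rangle$ approximations. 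The aim is a corner witness of the same length.

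I expect this to be delicate for the exact length $\ell$ in (ii). If compression is awkward, my first fallback is to rerun the ASTimoney construction directly in $A$. Given a Glimm ideal that is not $(\ell^2+1)$-primal, they produce $a_j\in A$, $b_j\in A$ with a norm gap. I will replace each $b_j$ by $b_j\xi$ with $\xi\in X$ a local full vector near the relevant primitive ideals, and transport the norm estimates via the Rieffel-induced irreducible representations of $B$. These are unitarily equivalent to the $X$-induced ones, so the local computations of $\|\cdot\|_{Z,h}$, which are computed fibrewise over Glimm quotients, and of $\|\cdot\|_\cb$ coincide.
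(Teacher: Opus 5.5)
Your sufficiency argument matches the paper's: the corner inclusions give a length-preserving complete isometry $A\otimes_{Z_X,h}B\to\Link\otimes_{Z_X,h}\Link$ that intertwines $\Theta_X$ with $\theta_\Link$ at every matrix level, and the Glimm and primality conditions pass between $A$, $B$ and $\Link$ via the Rieffel correspondence. One slip: the left inverse is $c\otimes d\mapsto e_{11}ce_{11}\otimes e_{22}de_{22}$, since $e_{11}\Link$ is the row $[A\ X]$ and not $A$.

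\textbf{The gap is in the necessity directions.} These carry the real content of the theorem, and neither of your plans supplies a working mechanism.
\begin{itemize}
\item \emph{Primary plan.} It is ill-typed: $e_{11}c_jx$ lies in $e_{11}\Link e_{22}=X$, not in $A$. You can repair it to $c\mapsto rcr^*$ and $d\mapsto s^*ds$ with $r\in e_{11}\Link$ and $s\in\Link e_{22}$. This does give $\Theta_X(v)=r\,\theta_\Link(u)(r^*\,\cdot\,s^*)\,s$. But your plan then gives no lower bound on $\|v\|_{Z_X,h}$, so you can establish neither $v\ne0$ nor $\|v\|_{Z_X,h}>\|\Theta_X(v)\|_{\cb}$.
\item \emph{Why compression cannot be rescued directly.} Compressions that nearly preserve norms need rows $R\in R_d(\Link e_{11})$ and $S\in R_d(\Link e_{22})$. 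They land in $M_d(A)\odot M_d(B)$, so they produce witnesses for $M_d(X)$, not for $X$. This is exactly why the device of Lemma~\ref{lem:finite-factorization} only yields statements about stabilized constants.
\item \emph{Fallback.} It has the same typing problem, since $b_j\xi\in X$ and not $B$. More seriously, it gives no mechanism guaranteeing the two properties a witness needs:
\begin{itemize}
\item modulo $G^X$, the right coefficients must be pairwise orthogonal positive norm-one elements, so that $\|u\|_{Z_X,h}$ equals the Haagerup norm of the model tensor;
\item for every $P\in\Prim(A)$, you need $d_j\in P^X$ whenever $c_j\in P$, so that the primitive-quotient formula bounds $\|\Theta_X(u)\|_{\cb}$ by $\max_{S\subsetneq[N]}\|w_S\|_h$.
\end{itemize}
Choosing ``local full vectors'' element by element does not secure the second property simultaneously for all $P$.
\end{itemize}

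\textbf{The missing idea} is to build the witness from ideals, matched across $X$ by the Rieffel correspondence, rather than from elements.
\begin{enumerate}
\item Let $N\ge2$ be least such that $G$ is not $N$-primal. Pick $I_1,\ldots,I_N\in\Id(A)$ with $I_1\cdots I_N=0$ and $I_j\not\subseteq G$. Minimality forces $I_iI_j\subseteq G$ for $i\ne j$, and the correspondents $I_j^X$ have the same properties relative to $G^X$.
\item Lift pairwise orthogonal positive norm-one elements of $(I_j+G)/G$ and of $(I_j^X+G^X)/G^X$ to pairwise orthogonal positive elements $c_j\in I_j$ and $d_j\in I_j^X$. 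Do this separately in $A$ and in $B$, using Lemma~\ref{lem:orthogonal-lifting}.
\item The map $\Lambda:\C^N\otimes_h\C^N\to A\otimes_{Z_X,h}B$ given by $e_i\otimes e_j\mapsto c_i\otimes_{Z_X}d_j$ is a complete isometry. You check this through $q_G$.
\item Each primitive $P$ contains some $I_j$, and then $P^X\supseteq I_j^X$. This yields $\|\Theta_X(\Lambda(w))\|_{\cb}\le\max_{S\subsetneq[N]}\|w_S\|_h$.
\item Taking $w=e_1\otimes e_2$ gives non-injectivity. The tensor of \cite[Lemma~16]{ArchboldSomersetTimoney2005} gives the norm drop at length at most $\ell$ whenever $N\le\ell^2+1$. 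Necessity in (iii) then follows from (ii) applied for every $\ell$.
\end{enumerate}
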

	
	The proof of Theorem~\ref{thm:main} is given in Section~\ref{sec:main} and exploits the canonical realization of $X$ as the upper-right corner of its linking algebra $D$ from \eqref{eq:linking-algebra}. For the sufficient implications, Morita invariance of the relevant Glimm-ideal conditions and the full-corner norm identities transfer the problem from $\Theta_X$ to the single-algebra coefficient map $\theta_D$, where the known criteria apply (Lemmas~\ref{lem:morita-ideals} and~\ref{lem:corner-calculus}; Theorem~\ref{thm:diagonal}; Proposition~\ref{prop:complete-isometry}). For the necessary implications, we argue contrapositively. A tensor witnessing failure of injectivity or norm preservation for $\theta_D$ need not have coefficients in $A$ and $B$, so it cannot in general be used directly for $\Theta_X$. When the relevant Glimm-ideal condition fails, orthogonal lifting and the Rieffel correspondence produce matched finite families of positive coefficients in $A$ and $B$, yielding a completely isometric finite-dimensional tensor embedding into $A\otimes_{Z_X,h}B$ such that every primitive quotient annihilates at least one matching pair (Lemma~\ref{lem:orthogonal-lifting}; Proposition~\ref{prop:transport-obstruction}). The two-coordinate case yields a nonzero tensor in the kernel of $\Theta_X$, while \cite[Lemma~16]{ArchboldSomersetTimoney2005} provides bounded-length tensors witnessing failure of norm preservation. The necessity in the isometry criterion then follows from the finite-length result for all tensor lengths.
	
	We also study the \emph{inverse constants} of $\Theta_X$, namely the optimal constants in the reverse estimate $\|u\|_{Z_X,h}\le\lambda\|\Theta_X(u)\|_{\cb}$. For each $r\in\N$, the length-$r$ constant is the least $\lambda\in[1,\infty)$ for which this holds whenever $u\in A\odot_{Z_X}B$ and $\len(u)\le r$, with value $+\infty$ if no such finite $\lambda$ exists. The completed constant is defined analogously on $A\otimes_{Z_X,h}B$, and stabilization means taking the supremum over the matrix bimodules $M_n(X)$, $n\in\N$ (see \eqref{eq:inverse-constants}). We prove that, for every finite tensor length and in the completed case, the stabilized constants for $X$, $A$ and $B$ coincide and agree with those for the compact stabilization $X\otimes\mathcal K_0$, the external tensor product imprimitivity bimodule with coefficient algebras $A\otimes\mathcal K_0$ and $B\otimes\mathcal K_0$, where $\mathcal K_0$ is the $C^*$-algebra of compact operators on a separable infinite-dimensional Hilbert space (Theorem~\ref{thm:full-corner-constants}; Corollary~\ref{cor:constants}). For a $C^*$-algebra $C$, Archbold, Somerset and Timoney established the compact-stabilization identity in the completed case and showed that the stabilized constant depends only on the homeomorphism class of $\Prim(C)$ \cite[Theorem~6.2 and Corollary~6.3]{ArchboldSomersetTimoney2009}. Our result extends this identity to every finite tensor length and from $C^*$-algebras to imprimitivity bimodules.
	
	The coefficient map $\Theta_X$ is closely related to the elementary operators on Hilbert $C^*$-modules studied by Aramba\v{s}i\'c and the author \cite{ArambasicGogic2020}. For a nonzero right Hilbert $A$-module $Y$, such operators have left coefficients in $\mathcal B_A(Y)$, the $C^*$-algebra of adjointable $A$-module operators, and right coefficients in $\M(A)$. In \cite[Theorem~3.12]{ArambasicGogic2020}, we proved that the associated coefficient map is isometric if and only if $A$ is prime, extending Mathieu's corresponding result for $C^*$-algebras. If $X={}_AX_B$ is viewed as a right Hilbert $B$-module, then $\mathcal B_B(X)\cong\M(A)$ \cite[Proposition~8.1.16]{BlecherLeMerdy2004}. Thus the Hilbert-module coefficient map uses coefficients in $\M(A)$ and $\M(B)$, whereas $\Theta_X$ uses coefficients in $A$ and $B$. Module Haagerup tensor products realize the equivalence functors for strong Morita equivalence \cite[Theorem~1.2]{Blecher2001}; see also \cite{BlecherMuhlyPaulsen2000} and, for related operator-bimodule methods and Morita invariance of maximal $C^*$-algebras of quotients, \cite{AraMathieuOrtega2009}.
	
	The paper is organized as follows. Section~\ref{sec:preliminaries} fixes the ideal-theoretic, operator-space, operator-module and Morita-theoretic conventions, recalls the single-algebra results needed below, and proves the complete-isometry result in Proposition~\ref{prop:complete-isometry}. Section~\ref{sec:main} establishes the full-corner formulas, develops the finite-dimensional coordinate transfer, and proves Theorem~\ref{thm:main}. Section~\ref{sec:constants} compares the stabilized inverse constants, proves the compact-stabilization result, and concludes with remarks on ternary rings of operators and Hilbert $C^*$-modules, as well as on the limitation concerning multiplier coefficients in the nonunital case.
	
	\section{Preliminaries}\label{sec:preliminaries}
	
	This section fixes the notation and background used in the proofs. Tensor products of $C^*$-algebras without a subscript are minimal (spatial).
	
	\subsection{Ideals and multiplier centres}\label{subsec:cstar-facts}
	
	We begin with the notions of primitive, Glimm, prime and $n$-primal ideals, together with the scalar action of the multiplier centre on Glimm quotients. We also record the orthogonal-lifting lemma used in the necessary implications of Theorem~\ref{thm:main}.
	
	For a $C^*$-algebra $C$, we write $\Id(C)$ for the ideal lattice of $C$. If $n\in\N$ and $I_1,\ldots,I_n\in\Id(C)$, we use 
	\[
	I_1\cdots I_n:=\cspan\{a_1\cdots a_n:a_j\in I_j,\ 1\le j\le n\}.
	\]
	As a standard consequence of the Cohen--Hewitt factorization theorem \cite[Proposition~2.33]{RaeburnWilliams1998},
	\begin{equation}\label{eq:ideal-products}
		I_1\cdots I_n=I_1\cap\cdots\cap I_n.
	\end{equation}
	
	A \emph{primitive ideal} of $C$ is the kernel of a nonzero irreducible representation of $C$, and $\Prim(C)$ denotes the \emph{primitive ideal space} of $C$, equipped with its Jacobson (hull-kernel) topology. Two primitive ideals are \emph{Glimm equivalent} if every bounded continuous function $\Prim(C)\to\C$ takes the same value at them. A \emph{Glimm ideal} is the intersection of a Glimm-equivalence class; write $\Glimm(C)$ for the set of
	Glimm ideals.
	
	By the Dauns--Hofmann theorem \cite[Theorem~A.34]{RaeburnWilliams1998} (see also the original source \cite{DaunsHofmann1968}), the multiplier centre $Z_C$ is canonically identified with the algebra $C_b(\Prim(C))$ of bounded continuous complex-valued functions. If $z\in Z_C$ and $\widehat z\in C_b(\Prim(C))$ is the corresponding function, then
	\[
	zc+P=\widehat z(P)(c+P),\qquad c\in C,\quad P\in\Prim(C).
	\]
	For $G\in\Glimm(C)$, define $\chi_G:Z_C\to\C$ by $\chi_G(z):=\widehat z(P)$, where $P$ belongs to the Glimm-equivalence class whose intersection is $G$. This is well defined, and
	\begin{equation}\label{eq:scalar-centre}
		zc-\chi_G(z)c\in G,\qquad z\in Z_C, \quad  c\in C.
	\end{equation}
	A $C^*$-algebra $C$ is \emph{prime} if it is nonzero and the product of any two nonzero ideals of $C$ is nonzero. A proper ideal $Q\in\Id(C)$ is \emph{prime} if, for all $I_1,I_2\in\Id(C)$,
	\[
	I_1I_2\subseteq Q \qquad\implies\qquad
	I_1\subseteq Q \quad\text{or}\quad I_2\subseteq Q.
	\] 
	This condition is equivalent to $C/Q$ being a prime $C^*$-algebra. Every primitive ideal $P\in\Prim(C)$ is prime \cite[Proposition~A.17(b)]{RaeburnWilliams1998}.
	
	For $n\in\N$ with $n\geq2$, a proper ideal $Q\in\Id(C)$ is \emph{$n$-primal} if, for all $I_1,\ldots,I_n\in\Id(C)$,
	\[
	I_1\cdots I_n=0 \qquad\implies\qquad
	I_j\subseteq Q \quad\text{for some }1\le j\le n.
	\] 
	The ideal $Q$ is \emph{primal} if it is $n$-primal for every $n\geq2$. Also, for integers $2\le k\le n$, $n$-primality implies $k$-primality: append
	$n-k$ copies of $C$ to a zero-product family of $k$ ideals.
	
	Positive elements of a $C^*$-algebra are called \emph{orthogonal} when their product is zero. We shall use the following orthogonal-lifting result, in which the lifts can be chosen in prescribed ideals. Its proof adapts the construction used in \cite[p.~415, proof of Theorem~7]{ArchboldSomersetTimoney2005} and \cite[p.~1412, proof of Theorem~5.1]{ArchboldSomersetTimoney2009}, based on \cite[Proposition~2.6]{AkemannPedersen1977}.
	
	\begin{lemma}\label{lem:orthogonal-lifting}
		Let $C$ be a $C^*$-algebra, let $N\in\N$, and let $J,I_1,\ldots,I_N\in\Id(C)$. Suppose that $\bar c_j\in(I_j+J)/J$, $1\le j\le N$, are pairwise orthogonal positive elements of norm one. Then there are pairwise orthogonal positive elements $c_j\in I_j$ of norm one such that $c_j+J=\bar c_j$ for $1\le j\le N$.
	\end{lemma}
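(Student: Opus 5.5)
Let $q:C\to C/J$ denote the quotient map and set $K_j:=I_j+J$, which is a closed ideal of $C$. The plan is to reduce the statement to the Akemann--Pedersen orthogonal-lifting result \cite[Proposition~2.6]{AkemannPedersen1977}, which says that pairwise orthogonal positive elements in a quotient $C^*$-algebra lift to pairwise orthogonal positive elements of the same norm. The restriction of $q$ to $I_j$ maps $I_j$ onto $K_j/J=(I_j+J)/J\cong I_j/(I_j\cap J)$. We want lifts lying in the individual ideals $I_j$, so we first perform an ambient orthogonal lift and then correct each lifted element inside $I_j$ by functional calculus.

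\emph{Step 1 (ambient lift).} Apply \cite[Proposition~2.6]{AkemannPedersen1977} to $C\to C/J$ to obtain pairwise orthogonal positive $e_1,\dots,e_N\in C$ of norm one with $q(e_j)=\bar c_j$. Orthogonality gives that the hereditary subalgebras $\overline{e_jCe_j}$ are pairwise orthogonal, that is, $\overline{e_iCe_i}\cdot\overline{e_jCe_j}=0$ for $i\neq j$.

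\emph{Step 2 (moving into $I_j$).} Since $\bar c_j\in K_j/J$, we have $e_j\in K_j=I_j+J$. Consider the hereditary subalgebra $H_j:=\overline{e_jCe_j}$. We have $H_j\cap I_j=\overline{e_jI_je_j}$, and $q$ maps $H_j\cap I_j$ onto $\overline{\bar c_j (I_j/(I_j\cap J))\bar c_j}$, using $q(I_j)=K_j/J\ni\bar c_j$. Surjectivity of $q$ on hereditary subalgebras follows from the standard fact that quotient maps restricted to hereditary subalgebras $\overline{eIe}$ are onto $\overline{q(e)q(I)q(e)}$. This hereditary quotient contains $\bar c_j$. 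Lift $\bar c_j$ to a positive element $d_j\in H_j\cap I_j$, using that positive elements lift positively through a surjection of $C^*$-algebras. Then replace $d_j$ by $c_j:=f(d_j)$ with $f(t)=\min(t,1)$, which preserves $q(c_j)=f(\bar c_j)=\bar c_j$ because $\|\bar c_j\|=1$. This makes $\|c_j\|\le1$, and the norm is exactly $1$ since $\|c_j\|\ge\|q(c_j)\|=1$. Since $c_j\in H_j$ and the $H_j$ are pairwise orthogonal, the $c_j$ are pairwise orthogonal, and $c_j\in I_j$ as $I_j$ is a $C^*$-subalgebra closed under this functional calculus (note $f(0)=0$).

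\emph{Main obstacle.} The delicate point is the surjectivity claim in Step 2, namely that $q(\overline{e_jI_je_j})$ contains $\bar c_j$. I would argue it as follows. The algebra $\overline{e_jI_je_j}=\overline{e_jK_je_j}\cap I_j$ is a hereditary subalgebra of $I_j$. Its image under the surjection $q|_{I_j}:I_j\to K_j/J$ equals $\overline{\bar c_j(K_j/J)\bar c_j}$, because images of $\overline{eIe}$ under surjections are closed and equal to $\overline{q(e)q(I)q(e)}$ by the Cohen factorization $e I e=e^{1/2}(e^{1/2}Ie^{1/2})e^{1/2}$ and openness of quotient maps. The element $\bar c_j=\lim \bar c_j^{1/n}\bar c_j\bar c_j^{1/n}$ lies in that hereditary algebra, since $\bar c_j\in K_j/J$.

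If this identification proves awkward, the fallback is to replace $I_j$ by $I_j\cap \overline{CeC}$ and lift directly using Akemann--Pedersen inside the $C^*$-algebra $I_1+\cdots+I_N$ with the ideal $J\cap(I_1+\cdots+I_N)$. In that case the orthogonality is again obtained via the hereditary algebras $H_j$.
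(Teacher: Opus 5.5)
Your proof is correct. It uses the same two ingredients as the paper's proof: \cite[Proposition~2.6]{AkemannPedersen1977} to get orthogonal lifts in $C$, and compression into $I_j$ inside the corner of the $j$th orthogonal lift, where orthogonality is automatic because $\overline{e_iCe_i}\cdot\overline{e_jCe_j}=0$ for $i\neq j$. You apply them in the opposite order, and you replace an explicit formula with an abstract step.

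The paper first lifts $\bar c_j^{1/3}$ to positive contractions $g_j\in I_j$. It then uses Akemann--Pedersen to get orthogonal $h_j=g_j-a_j$ with $a_j\in J$, so $h_j$ may leave $I_j$. It returns to $I_j$ via $b_j=h_jg_jh_j\in h_jI_jh_j$, which maps to $\bar c_j^{1/3}\bar c_j^{1/3}\bar c_j^{1/3}=\bar c_j$. This $b_j$ is an explicit element of your algebra $\overline{e_jI_je_j}$ for the orthogonal lift $e_j:=h_j$ of $\bar c_j^{1/3}$. You orthogonalize first and obtain the lift from $q(\overline{e_jI_je_j})=\overline{\bar c_j\,q(I_j)\,\bar c_j}\ni\bar c_j$. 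The cube-root trick keeps the paper's proof free of hereditary-subalgebra facts. Your version is more conceptual, and it shows that the $c_j$ can be chosen inside $\overline{e_jCe_j}$ for any prescribed orthogonal positive lifts $e_j$.

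Your ``main obstacle'' is simpler than you make it. Since $\overline{e_jI_je_j}$ is a $C^*$-algebra, its image under the $*$-homomorphism $q$ is closed. Moreover, $\bar c_j\in\overline{\bar c_j\,q(I_j)\,\bar c_j}$: put $y_k:=h_k(\bar c_j)\in q(I_j)$ with $h_k(t):=t(t+1/k)^{-2}$; then $\bar c_jy_k\bar c_j\to\bar c_j$, because $t^3(t+1/k)^{-2}\to t$ uniformly on $[0,1]$. No Cohen factorization, openness argument or fallback is needed.

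Akemann--Pedersen is a perturbation statement, so Step~1 should first lift each $\bar c_j$ to some positive element and then perturb by elements of $J$. Norm one for the $e_j$ is not needed, since you truncate at the end.
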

	
	\begin{proof}
		Let $\pi:C\to C/J$ be the quotient map. Since $(I_j+J)/J=\pi(I_j)$ is an ideal of $C/J$, we have $\bar c_j^{1/3}\in(I_j+J)/J$ for $1\le j\le N$. For each $1\le j\le N$, choose a self-adjoint element $x_j\in I_j$ such that $\pi(x_j)=\bar c_j^{1/3}$. Define a function 
		\[
		\tau:\mathbb R\to[0,1], \qquad \tau(t):=\min\{\max\{t,0\},1\},
		\]
		and put $g_j:=\tau(x_j)$ for $1\le j\le N$. Since $\tau(0)=0$, continuous functional calculus shows that $g_j$ is a positive contraction in $I_j$, and $\pi(g_j)=\bar c_j^{1/3}$ because $\bar c_j^{1/3}$ is a positive contraction.
		
		Since the elements $\bar c_1^{1/3},\ldots,\bar c_N^{1/3}$ are pairwise orthogonal, for distinct $i,j\in\{1,\ldots,N\}$ we have
		\[
		\pi(g_i g_j)=\bar c_i^{1/3}\bar c_j^{1/3}=0.
		\]
		Thus $g_i g_j\in J$ whenever $i\ne j$. By \cite[Proposition~2.6]{AkemannPedersen1977}, there exist $a_1,\ldots,a_N\in J$ such that the elements $h_j:=g_j-a_j\in C$, $1\le j\le N$, are positive and pairwise orthogonal. Since $a_j\in J$, we also have $\pi(h_j)=\bar c_j^{1/3}$ for $1\le j\le N$. Set
		\[
		b_j:=h_jg_jh_j\in I_j,\qquad 1\le j\le N.
		\]
		Each $b_j$ is positive. For distinct $1\le j,k\le N$, the equality $h_jh_k=0$ gives $b_jb_k=0$, and $\pi(b_j)=\bar c_j$ for every $1\le j\le N$.
		
		For $1\le j\le N$, put $c_j:=\tau(b_j)$. Since $\tau(0)=0$, continuous functional calculus shows that $c_j$ is a positive contraction in $I_j$. Moreover, $b_jb_k=0$ for $j\ne k$ and $\tau(0)=0$ imply $c_jc_k=0$, so the elements $c_1,\ldots,c_N$ are pairwise orthogonal. Since $\bar c_j$ is a positive contraction,
		\[
		\pi(c_j)=\tau(\pi(b_j))=\tau(\bar c_j)=\bar c_j,\qquad 1\le j\le N.
		\]
		Thus $\|c_j\|\geq\|\pi(c_j)\|=\|\bar c_j\|=1$, while $c_j$ is a positive contraction and hence $\|c_j\|\le1$. Therefore $\|c_j\|=1$ for every $1\le j\le N$.
	\end{proof}
	
	\subsection{Operator spaces, modules and Haagerup tensor products}\label{subsec:operator-modules}
	
	We recall the operator-space and operator-module facts used below; our terminology follows \cite[Sections~1.2, 1.5, 3.1 and~3.4]{BlecherLeMerdy2004}.
	
	For Hilbert spaces $H$ and $K$, let $\mathcal B(H,K)$ denote the space of bounded linear operators from $H$ to $K$, and write $\mathcal B(H):=\mathcal B(H,H)$. By an \emph{operator space} we mean a norm-closed linear subspace $V\subseteq\mathcal B(H,K)$, for some Hilbert spaces $H$ and $K$, with the inherited matrix norms. Thus, for $r,s\in\N$, $M_{r,s}(V)$ is canonically identified with a subspace of $\mathcal B(H^{\oplus s},K^{\oplus r})$ and carries the corresponding operator norm. We set $M_n(V):=M_{n,n}(V)$ for $n\in\N$. Write $I_n\in M_n(\C)$ for the identity matrix and $E_{ij}(v)\in M_{r,s}(V)$ for the matrix with entry $v\in V$ in position $(i,j)$ and zero elsewhere, where $1\le i\le r$ and $1\le j\le s$.
	
	For operator spaces $V,W$, a linear map $T:V\to W$ and $n\in\N$, define the \emph{$n$th amplification} of $T$ by
	\[
	T^{[n]}:M_n(V)\to M_n(W),\qquad
	T^{[n]}([v_{ij}]):=[T(v_{ij})].
	\]
	The map $T$ is \emph{completely bounded} if
	\[
	\|T\|_{\cb}:=\sup_{n\in\N}\|T^{[n]}\|<\infty,
	\]
	\emph{completely contractive} if $\|T\|_{\cb}\le1$, and \emph{completely isometric} if every $T^{[n]}$ is isometric. The space $\CB(V,W)$ of completely bounded linear maps from $V$ to $W$ is a Banach space with respect to $\|\cdot\|_{\cb}$ and carries the canonical operator-space structure characterized, for each $n\in\N$, by the complete isometry
	\begin{equation}\label{eq:cb-matrix-identification}
		M_n(\CB(V,W))\cong\CB(V,M_n(W)),\qquad
		[T_{ij}]\mapsto\left(v\mapsto[T_{ij}(v)]\right).
	\end{equation}
	See \cite[1.2.19]{BlecherLeMerdy2004}. We set $\CB(V):=\CB(V,V)$.
	
	Every $C^*$-algebra carries its canonical operator-space structure. A homomorphism between $C^*$-algebras is completely contractive if and only if it is a $*$-homomorphism; in this case it is completely isometric if and only if it is injective \cite[1.2.3 and Proposition~1.2.4]{BlecherLeMerdy2004}.
	
	If $V_0\subseteq V$ is a closed linear subspace, then $V/V_0$ carries the \emph{quotient operator-space structure} determined by
	$M_n(V/V_0)\cong M_n(V)/M_n(V_0)$; in particular, the canonical quotient map $V\to V/V_0$ is completely contractive. See \cite[1.2.14]{BlecherLeMerdy2004}.
	
	For $n\in\N$ and $U=[u_{ij}]\in M_n(V\odot W)$, define the \emph{Haagerup norm} of $U$ by
	\[
	\|U\|_h:=
	\inf\left\{\|R\|\,\|S\|:
	s\in\N,\ R\in M_{n,s}(V),\
	S\in M_{s,n}(W),\ U=R\odot S\right\}.
	\]
	Here, if $R=[v_{ik}]\in M_{n,s}(V)$ and
	$S=[w_{kj}]\in M_{s,n}(W)$, then
	$R\odot S\in M_n(V\odot W)$ has $(i,j)$-entry
	\[
	(R\odot S)_{ij}:=
	\sum_{k=1}^s v_{ik}\otimes w_{kj}.
	\]
	The completion is the \emph{Haagerup tensor product}
	$V\otimes_hW$ \cite[1.5.4]{BlecherLeMerdy2004}. At the first matrix level this recovers \eqref{eq:haagerup-norm} when $V=W=\M(C)$. If $V'$ and $W'$ are operator spaces and $\alpha:V\to V'$ and $\beta:W\to W'$ are complete contractions, then $\alpha\otimes_h\beta$ is a complete contraction \cite[1.5.5]{BlecherLeMerdy2004}; if both maps are completely isometric, Haagerup injectivity \cite[Theorem~3.6]{BlecherPaulsen1991} gives
	\begin{equation}\label{eq:haagerup-injectivity}
		\|(\alpha\otimes_h\beta)^{[n]}(U)\|_h
		=\|U\|_h,
		\qquad n\in\N,\quad U\in M_n(V\otimes_hW).
	\end{equation}
	Let $A$ and $B$ be $C^*$-algebras. An \emph{operator $A$--$B$ bimodule} is an $A$--$B$ bimodule $X$ which is also an operator space and for which there exist Hilbert spaces $H$ and $K$, $*$-homomorphisms $\pi:A\to\mathcal B(K)$ and $\sigma:B\to\mathcal B(H)$, and a linear complete isometry $\Phi:X\to\mathcal B(H,K)$ such that
	\[
	\Phi(ax)=\pi(a)\Phi(x),\qquad \Phi(xb)=\Phi(x)\sigma(b),
	\qquad a\in A,\quad b\in B,\quad x\in X.
	\]
	Thus the module actions are realized by operator multiplication; see \cite[3.1.1]{BlecherLeMerdy2004}.
	
	Let $Z$ be a unital commutative $C^*$-algebra, and let $V$ and $W$ be right and left operator $Z$-modules, respectively. Their \emph{module Haagerup tensor product} is
	\begin{align*}
		V\otimes_{Z,h}W&:=(V\otimes_hW)/N_Z(V,W),\\
		N_Z(V,W)&:=
		\cspan\{vz\otimes w-v\otimes zw:
		v\in V,\ w\in W,\ z\in Z\};
	\end{align*}
	see \cite[3.4.2]{BlecherLeMerdy2004}. We write $v\otimes_Z w$ for the image of $v\otimes w$ in this quotient, $V\odot_Z W$ for the image of $V\odot W$ in $V\otimes_{Z,h}W$, and $\|\cdot\|_{Z,h}$ for the quotient norm; thus
	\[
	vz\otimes_Z w=v\otimes_Z zw,
	\qquad v\in V,\quad w\in W,\quad z\in Z.
	\]
	Suppose that $A$ and $B$ are, respectively, right and left operator $Z$-modules, and let $X$ be an operator $A$--$B$ bimodule. By \cite[3.1.3 and Proposition~3.1.14]{BlecherLeMerdy2004}, the canonical left and right module actions on the matrix spaces over $X$ are contractive. Together with \eqref{eq:cb-matrix-identification} and the defining factorization of the Haagerup norm, this shows that the coefficient action induces a complete contraction
	\[
	A\otimes_hB\to\CB(X),\qquad
	a\otimes b\mapsto(x\mapsto axb).
	\]
	If the coefficient action is \emph{$Z$-balanced}, that is,
	\[
	(az)xb=ax(zb),
	\qquad a\in A,\quad b\in B,\quad x\in X,\quad z\in Z,
	\]
	then this map annihilates $N_Z(A,B)$ and hence, by \cite[3.4.2]{BlecherLeMerdy2004}, induces the complete contraction
	\begin{equation}\label{eq:module-coefficient-map}
		\Theta_X:A\otimes_{Z,h}B\to\CB(X),\qquad
		\Theta_X(a\otimes_Z b)(x):=axb,
	\end{equation}
	where $a\in A$, $b\in B$ and $x\in X$.
	
	We shall also use functoriality of the module Haagerup tensor product. Let $V'$ and $W'$ be right and left operator $Z$-modules, respectively. If $\alpha:V\to V'$ and $\beta:W\to W'$ are completely contractive $Z$-module maps, then \cite[Lemma~3.4.5]{BlecherLeMerdy2004} gives
	\begin{equation}\label{eq:module-haagerup-functoriality}
		\alpha\otimes_{Z,h}\beta:
		V\otimes_{Z,h}W\to V'\otimes_{Z,h}W', \qquad \|\alpha\otimes_{Z,h}\beta\|_{\cb}\le1.
	\end{equation}
	If, in addition, $\alpha$ and $\beta$ have completely contractive $Z$-module left inverses, then applying \eqref{eq:module-haagerup-functoriality} to these left inverses shows that $\alpha\otimes_{Z,h}\beta$ is completely isometric.
	
	Finally, for $n\in\N$, set 
	\[
	C_n(V):=M_{n,1}(V) \qquad \text{and} \qquad R_n(W):=M_{1,n}(W),
	\]
	with their induced entrywise $Z$-actions. For $R=[v_i]_{i=1}^n\in C_n(V)$ and $S=[w_j]_{j=1}^n\in R_n(W)$, the module column--row identification \cite[3.4.11]{BlecherLeMerdy2004} gives the complete isometry
	\begin{equation}\label{eq:matrix-haagerup}
		C_n(V)\otimes_{Z,h}R_n(W)
		\cong M_n(V\otimes_{Z,h}W),\qquad
		R\otimes_Z S\mapsto[v_i\otimes_Z w_j]_{i,j=1}^n.
	\end{equation}
	
	\subsection{Morita equivalence, ideals and centres}\label{subsec:ideals}
	
	We now recall the Morita-theoretic facts used below: the linking-algebra realization of an imprimitivity bimodule, the Rieffel correspondence for ideals, and the induced identification of multiplier centres.
	
	We use the Hilbert-module conventions of \cite[Chapter~1]{Lance1995} and the imprimitivity-bimodule conventions of \cite[Sections~3.1--3.2]{RaeburnWilliams1998}. For $C^*$-algebras $A$ and $B$, an $A$--$B$ \emph{imprimitivity bimodule} $X={}_AX_B$ is a full right Hilbert $B$-module, with inner product linear in the second variable, whose left action identifies $A$ with the $C^*$-algebra of \emph{compact operators} on $X$. More precisely, for $x,y\in X$, define
	\[
	\vartheta_{x,y}:X\to X,\qquad \vartheta_{x,y}(v):=x\langle y,v\rangle_B,\qquad v\in X,
	\]
	and set
	\[
	\mathcal K_B(X):=\cspan\{\vartheta_{x,y}:x,y\in X\},
	\]
	where the closure is taken in the operator norm. Thus the left action induces a $*$-isomorphism of $A$ onto $\mathcal K_B(X)$. Here \emph{fullness} means $\cspan\langle X,X\rangle_B=B$, and, under this identification, the left $A$-valued inner product is given by ${}_A\langle x,y\rangle:=\vartheta_{x,y}$ for $x,y\in X$. We call $A$ and $B$ the \emph{coefficient algebras} of $X$. The $C^*$-algebras $A$ and $B$ are said to be \emph{strongly Morita equivalent} if there exists an $A$--$B$ imprimitivity bimodule $X$ \cite{Rieffel1974}. The \emph{linking algebra} of $X$ is the $C^*$-algebra
	\begin{equation}\label{eq:linking-algebra}
		D:=\Link(X):=\begin{bmatrix}A&X\\\widetilde X&B\end{bmatrix},
	\end{equation}
	where $\widetilde X$ is the conjugate $B$--$A$ bimodule, and $x^*\in\widetilde X$ denotes the element corresponding to $x\in X$ under the canonical conjugate-linear bijection $X\to\widetilde X$. The products of opposite off-diagonal entries are $xy^*:={}_A\langle x,y\rangle$ and $x^*y:=\langle x,y\rangle_B$ for $x,y\in X$. The $C^*$-norm on $D$ is the operator norm induced by its canonical action on the Hilbert $B$-module $X\oplus B$; see \cite[8.1.17]{BlecherLeMerdy2004}. Let $p,q\in\M(D)$ be the canonical linking projections; then
	\[
	p+q=1_{\M(D)},\qquad pq=0,\qquad A=pDp,\qquad X=pDq\qquad\text{and}\qquad B=qDq.
	\]
	The canonical operator-space structure of $X$ is the one inherited from $D$ \cite[8.2.1]{BlecherLeMerdy2004}. With this structure, $X$ is an operator $\M(A)$--$\M(B)$ bimodule, and hence in particular an operator $A$--$B$ bimodule \cite[8.2.3(1)]{BlecherLeMerdy2004}. For $n\in\N$, the matrix bimodule $M_n(X)$ is an $M_n(A)$--$M_n(B)$ imprimitivity bimodule with inner products
	\[
	{}_{M_n(A)}\langle U,V\rangle:=UV^*
	\qquad\text{and}\qquad
	\langle U,V\rangle_{M_n(B)}:=U^*V,
	\qquad U,V\in M_n(X).
	\]
	Rearranging the block coordinates identifies the linking algebra of $M_n(X)$ with $M_n(D)$.
	
	For a $C^*$-algebra $C$, a projection $e\in\M(C)$ is called \emph{full} if $\cspan(CeC)=C$; in this case $eCe$ is a \emph{full corner} of $C$ \cite[p.~50]{RaeburnWilliams1998}. Both canonical linking projections are full. If $e,f\in\M(C)$ are full projections, then $eCf$ is an $eCe$--$fCf$ imprimitivity bimodule, with inner products 
	\[
	{}_{eCe}\langle x,y\rangle:=xy^* \qquad  \text{and} \qquad  \langle x,y\rangle_{fCf}:=x^*y, \qquad x,y\in eCf,
	\]
	and with the matrix norms inherited from $C$. We call $eCf$ a \emph{full rectangular corner} of $C$.
	
	Given an imprimitivity bimodule $X={}_AX_B$ and $I\in\Id(A)$, put
	\[
	IX:=\cspan\{ax:a\in I,\ x\in X\} \subseteq X \qquad \text{and} \qquad \widetilde{IX}:=\{x^*:x\in IX\}\subseteq\widetilde X.
	\]
	The \emph{Rieffel correspondent} $I^X\in\Id(B)$ of $I$ and its \emph{linking ideal} $\widehat I\in\Id(D)$ are given, respectively, by
	\[
	I^X:=\cspan\{\langle x,ay\rangle_B:a\in I,\ x,y\in X\} \qquad \text{and} \qquad
	\widehat I:=
	\begin{bmatrix}
		I & IX\\
		\widetilde{IX} & I^X
	\end{bmatrix}.
	\]
	The \emph{Rieffel correspondence} \cite[Theorem~3.22]{RaeburnWilliams1998} is the order isomorphism
	\begin{equation}\label{eq:rieffel-correspondence}
		\Id(A)\to\Id(B),\qquad I\mapsto I^X.
	\end{equation}
	Its restriction $P\mapsto P^X$ is a homeomorphism from $\Prim(A)$ onto $\Prim(B)$ \cite[Corollary~3.33]{RaeburnWilliams1998}.
	
	If $C$ is a $C^*$-algebra and $e\in\M(C)$ is a full projection, then $Ce$ is a $C$--$eCe$ imprimitivity bimodule \cite[Example~3.6]{RaeburnWilliams1998}. Its Rieffel correspondence is the order isomorphism $J\mapsto eJe$ from $\Id(C)$ onto $\Id(eCe)$, with inverse
	$I\mapsto\cspan(CIC)$ for $I\in\Id(eCe)$. In particular, $Dp$ is a $D$--$A$ imprimitivity bimodule, and the inverse correspondence is $I\mapsto\widehat I$ from $\Id(A)$ onto $\Id(D)$; it restricts to a homeomorphism from $\Prim(A)$ onto $\Prim(D)$.
	
	\begin{lemma}\label{lem:morita-ideals}
		Let $X={}_AX_B$ be an imprimitivity bimodule. For every integer $n\geq2$, the Rieffel correspondence \eqref{eq:rieffel-correspondence} preserves Glimm ideals and $n$-primality in both directions. In particular, if $C$ is a $C^*$-algebra, $e\in\M(C)$ is a full projection and $k\in\N$, the same holds for the correspondences
		\[
		\Id(C)\to\Id(eCe),\qquad J\mapsto eJe,
		\qquad\text{and}\qquad
		\Id(C)\to\Id(M_k(C)),\qquad I\mapsto M_k(I).
		\]
	\end{lemma}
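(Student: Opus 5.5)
The plan is to reduce everything to lattice-theoretic and topological properties of the Rieffel correspondence $\Phi:\Id(A)\to\Id(B)$, $I\mapsto I^X$.

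\textbf{$n$-primality.} By \eqref{eq:rieffel-correspondence}, $\Phi$ is an order isomorphism of complete lattices, so it preserves arbitrary intersections. It is also proper-to-proper, since $\Phi(A)=B$. By \eqref{eq:ideal-products}, $I_1\cdots I_n=I_1\cap\cdots\cap I_n$ in both $\Id(A)$ and $\Id(B)$. Hence
\[
I_1\cdots I_n=0\iff \Phi(I_1)\cdots\Phi(I_n)=0,
\]
because $\Phi(0)=0$. Likewise $I_j\subseteq Q$ if and only if $\Phi(I_j)\subseteq\Phi(Q)$. Since $\Phi$ is surjective, every $n$-tuple of ideals of $B$ has the form $(\Phi(I_1),\ldots,\Phi(I_n))$. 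The definition of $n$-primality therefore transfers verbatim in both directions.

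\textbf{Glimm ideals.} $\Phi$ restricts to a homeomorphism $h:\Prim(A)\to\Prim(B)$. Composition with $h$ identifies $C_b(\Prim(B))$ with $C_b(\Prim(A))$. Consequently $P,P'$ are Glimm equivalent in $\Prim(A)$ exactly when $h(P),h(P')$ are Glimm equivalent in $\Prim(B)$, so $h$ maps Glimm classes onto Glimm classes. For a Glimm class $\mathcal G\subseteq\Prim(A)$, preservation of intersections gives
\[
\Phi\Bigl(\bigcap_{P\in\mathcal G}P\Bigr)=\bigcap_{P\in\mathcal G}\Phi(P)=\bigcap h(\mathcal G).
\]
This is a Glimm ideal of $B$, and the same argument applied to $\Phi^{-1}$ gives the reverse direction.

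The one point needing care is that $\Phi$ preserves \emph{infinite} intersections. Because it is an order isomorphism, $\Phi(\bigcap I_\alpha)$ is the greatest lower bound of the $\Phi(I_\alpha)$ in $\Id(B)$. In $\Id(B)$ the greatest lower bound of a family of closed ideals is their intersection, which is again a closed ideal. So no topology beyond the order structure is needed.

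\textbf{The special cases.} For a full projection $e$, the bimodule $Ce$ is a $C$--$eCe$ imprimitivity bimodule whose Rieffel correspondence is $J\mapsto eJe$, as recalled above, so the general statement applies directly.

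For $M_k(C)$, I take the full projection $e=E_{11}(1)\in\M(M_k(C))$. Then $eM_k(C)e\cong C$, and the correspondence $I\mapsto\cspan(M_k(C)\,I\,M_k(C))$ equals $M_k(I)$. This can be checked entrywise, using $I=ICI$ from \eqref{eq:ideal-products}, or with approximate units. The map $I\mapsto M_k(I)$ is thus the inverse Rieffel correspondence for this full corner, and it preserves Glimm ideals and $n$-primality because the inverse of a correspondence having these properties in both directions also has them.
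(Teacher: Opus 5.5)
Your proposal is correct and follows essentially the same route as the paper. For $n$-primality you both use that an order isomorphism preserves intersections, properness and the zero ideal, together with \eqref{eq:ideal-products}; for Glimm ideals you both use the induced homeomorphism of $\Prim$ and preservation of arbitrary intersections; and for the two special cases you both use the bimodule $Ce$ and the full projection $E_{11}(1_{\M(C)})$ (the paper sets aside the trivial case $C=0$ separately, which your argument covers implicitly).
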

	
	\begin{proof}
		Since \eqref{eq:rieffel-correspondence} is an order isomorphism, it preserves arbitrary intersections, the zero ideal and proper ideals. Hence \eqref{eq:ideal-products} gives preservation of $n$-primality in both directions. The induced homeomorphism of primitive ideal spaces carries Glimm-equivalence classes onto Glimm-equivalence classes, and preservation of arbitrary intersections therefore gives preservation of Glimm ideals.
		
		The full-corner assertion follows by applying this to the $C$--$eCe$ imprimitivity bimodule $Ce$. For $C\ne0$, the matrix correspondence is the inverse of the full-corner correspondence associated with the full projection $E_{11}(1_{\M(C)})\in\M(M_k(C))$; for $C=0$ the assertion is immediate.
	\end{proof}
	
	For a $C^*$-algebra $C$ and a full projection $e\in\M(C)$, compression gives a $*$-isomorphism
	\begin{equation}\label{eq:corner-centre}
		Z_C\to Z(\M(eCe)),\qquad z\mapsto eze.
	\end{equation}
	Indeed, for $z\in Z_C$ and $P\in\Prim(C)$, $eze$ acts on $eCe/ePe$ as the scalar $\widehat z(P)$. The full-corner homeomorphism $\Prim(C)\to\Prim(eCe)$, $P\mapsto ePe$ \cite[Example~3.6 and Corollary~3.33]{RaeburnWilliams1998}, identifies the corresponding algebras of bounded continuous functions, so the assertion follows from the Dauns--Hofmann identification in Section~\ref{subsec:cstar-facts}.
	
	For the remainder of this subsection, fix a nonzero imprimitivity bimodule $X={}_AX_B$, and retain the notation $D=\Link(X)$ and $p,q\in\M(D)$ for its linking algebra and canonical linking projections. Applying \eqref{eq:corner-centre} to $p$ and $q$ identifies $Z(\M(D))$, $Z(\M(A))$ and $Z(\M(B))$; we use the notation $Z_X$ from the Introduction for this common multiplier centre. The induced $*$-isomorphism between $Z(\M(A))$ and $Z(\M(B))$ is characterized by
	\[
	zx=xz,\qquad z\in Z_X,\quad x\in X;
	\]
	see \cite[Corollary~8.1.20]{BlecherLeMerdy2004}. Thus $A$ and $B$ are, respectively, right and left operator $Z_X$-modules, and the coefficient action on $X$ is $Z_X$-balanced. Hence \eqref{eq:module-coefficient-map} gives precisely the coefficient map \eqref{eq:theta-X}; in particular, $\Theta_X$ is completely contractive. For every $n\in\N$, the map $z\mapsto zI_n$ identifies $Z_X$ with the common multiplier centre of $M_n(A)$ and $M_n(B)$.
	
	Let $I\in\Id(A)$ contain a Glimm ideal $G\in\Glimm(A)$. By Lemma~\ref{lem:morita-ideals}, $\widehat G\in\Glimm(D)$. Since $p\widehat Gp=G$ and $q\widehat Gq=G^X$, \eqref{eq:scalar-centre} applied in $D$ to $\widehat G$ shows that every $z\in Z_X$ acts on both $A/G$ and $B/G^X$ as multiplication by the same scalar $\chi_{\widehat G}(z)$. Since $G\subseteq I$ and $G^X\subseteq I^X$, the same is true on $A/I$ and $B/I^X$, and the quotient maps $A\to A/I$ and $B\to B/I^X$ are $Z_X$-module maps. Hence, for $a\in A$, $b\in B$ and $z\in Z_X$,
	\begin{align*}
		(az+I)\otimes(b+I^X)
		&\stackrel{\eqref{eq:scalar-centre}}{=}
		\chi_{\widehat G}(z)(a+I)\otimes(b+I^X)\\
		&= (a+I)\otimes\chi_{\widehat G}(z)(b+I^X)\\
		&\stackrel{\eqref{eq:scalar-centre}}{=}
		(a+I)\otimes(zb+I^X).
	\end{align*}
	Thus the balancing subspace for $(A/I)\otimes_{Z_X,h}(B/I^X)$ is zero, so this module Haagerup tensor product is canonically $(A/I)\otimes_h(B/I^X)$. Therefore
	\eqref{eq:module-haagerup-functoriality} gives a complete contraction
	\begin{equation}\label{eq:balanced-quotient-X}
		\begin{aligned}
			q_I&:A\otimes_{Z_X,h}B\to(A/I)\otimes_h(B/I^X),
			\qquad u\mapsto u_I,\\
			(a\otimes_{Z_X}b)_I&:=(a+I)\otimes(b+I^X),
			\qquad a\in A,\ b\in B.
		\end{aligned}
	\end{equation}
	Since every primitive ideal contains the Glimm ideal of its class, $q_P$ is defined for every $P\in\Prim(A)$.
	
	\subsection{The single-algebra results}\label{subsec:single-algebra}
	
	This subsection records the single-algebra norm formulas and ideal-theoretic criteria that will be transferred to imprimitivity bimodules in Section~\ref{sec:main}. We also isolate the matrix-level argument needed to pass from isometry to complete isometry.
	
	For a nonzero $C^*$-algebra $C$, we regard $C$ as a $C$--$C$ imprimitivity bimodule with left and right multiplication and inner products
	\[
	{}_C\langle a,b\rangle:=ab^* \qquad\text{and}\qquad  \langle a,b\rangle_C:=a^*b,
	\qquad a,b\in C.
	\]
	For this bimodule, the Rieffel correspondence is the identity on $\Id(C)$, and the coefficient map from \eqref{eq:module-coefficient-map}, with $Z=Z_C$, is $\Theta_C=\theta_C$. We now recall the results of Somerset and Archbold--Somerset--Timoney for $\theta_C$.
	
	\begin{theorem}[Somerset; Archbold--Somerset--Timoney]\label{thm:diagonal}
		Let $C$ be a nonzero $C^*$-algebra. For every $u\in C\otimes_{Z_C,h}C$,
		\begin{equation}\label{eq:diagonal-quotients}
			\|u\|_{Z_C,h}=\sup_{G\in\Glimm(C)}\|u_G\|_h \qquad \text{and}
			\qquad \|\theta_C(u)\|_{\cb}=\sup_{P\in\Prim(C)}\|u_P\|_h.
		\end{equation}
		Moreover:
		\begin{enumerate}[label=\textup{(\roman*)}]
			\item $\theta_C$ is injective if and only if every Glimm ideal of $C$ is $2$-primal;
			\item for each $\ell\in\N$, $\theta_C$ preserves the norm of every $u\in C\odot_{Z_C}C$ with $\len(u)\le\ell$ if and only if every Glimm ideal of $C$ is $(\ell^2+1)$-primal;
			\item $\theta_C$ is isometric if and only if every Glimm ideal of $C$ is primal.
		\end{enumerate}
	\end{theorem}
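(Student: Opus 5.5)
The plan is to assemble this from the literature rather than reprove it, reducing the general statement to the two norm formulas in \eqref{eq:diagonal-quotients} and then invoking the ideal-theoretic criteria of Somerset and Archbold--Somerset--Timoney.

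\emph{Step 1: the completely bounded norm formula.} For the second identity in \eqref{eq:diagonal-quotients}, I would first use that the irreducible representations of $C$ separate the points of $M_n(C)$ for every $n$. From this, for $T=\theta_C(u)$, I expect $\|T\|_{\cb}=\sup_{P\in\Prim(C)}\|T_P\|_{\cb}$, where $T_P$ is the induced map on $C/P$ (note $T(P)\subseteq P$). Next, $T_P=\theta_{C/P}(u_P)$, and $C/P$ is prime. Mathieu's theorem \cite[Proposition~5.4.11]{AraMathieu2003} then gives $\|T_P\|_{\cb}=\|u_P\|_{\M(C/P)\otimes_h\M(C/P)}$. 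Finally, Haagerup injectivity \eqref{eq:haagerup-injectivity}, applied to the inclusion $C/P\subseteq\M(C/P)$, identifies this norm with $\|u_P\|_h$ computed in $(C/P)\otimes_h(C/P)$.

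\emph{Step 2: the central Haagerup norm formula.} For the first identity in \eqref{eq:diagonal-quotients}, the inequality $\ge$ is immediate, since each $q_G$ is contractive. The reverse inequality is the substantive part. In the unital case it is Somerset's formula \cite{Somerset1998}, which rests on a Dauns--Hofmann/partition-of-unity argument over the compact Hausdorff space $\Glimm(C)$. In general it is the extension in \cite[Section~3]{ArchboldSomersetTimoney2009}, which passes to ideals of $C$ and to the Glimm space. I would cite these results rather than redo them. I expect this to be the main technical input: one has to glue near-optimal Haagerup representations over neighbourhoods of Glimm points using central partitions of unity, and this is exactly where balancing over $Z_C$ is used.

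\emph{Step 3: the three criteria.} With both formulas available, each criterion becomes a comparison between $\sup_G\|u_G\|_h$ and $\sup_P\|u_P\|_h$, that is, a question of whether the quotient norm at a Glimm ideal $G$ is recovered from the primitive ideals containing $G$. Injectivity fails exactly when some tensor with $u_G\ne0$ satisfies $u_P=0$ for all $P\supseteq G$. Such tensors are produced precisely from pairs of ideals with zero product that are not contained in $G$, which is the failure of $2$-primality of $G$. Similarly, norm preservation at tensor length $\le\ell$ is governed by $(\ell^2+1)$-primality through \cite[Lemma~16]{ArchboldSomersetTimoney2005}. Accordingly, (i), (ii) and (iii) are exactly \cite[Corollary~6(ii) and Theorem~4]{Somerset1998} and \cite[Theorems~8 and~17]{ArchboldSomersetTimoney2005} in the unital case, and \cite[Theorems~3.8 and~3.9]{ArchboldSomersetTimoney2009} in general. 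For (iii), isometry of $\theta_C$ means norm preservation at every tensor length. Item (ii) for all $\ell$ gives primality of every Glimm ideal, and conversely primality gives isometry on $C\odot_{Z_C}C$, hence on the completion by density and continuity.
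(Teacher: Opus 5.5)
Your argument is correct and, like the paper's, is essentially an assembly of citations. The differences are local.

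\textbf{Step 1.} Instead of citing \cite[Proposition~3.6]{ArchboldSomersetTimoney2009}, you derive the primitive-ideal formula:
\begin{itemize}
\item joint faithfulness of the irreducible representations on each $M_n(C)$ gives $\|T\|_{\cb}=\sup_P\|T_P\|_{\cb}$;
\item continuity gives $T_P=\theta_{C/P}(u_P)$;
\item Mathieu's isometry for the prime algebra $C/P$ \cite[Proposition~5.4.11]{AraMathieu2003}, together with Haagerup injectivity for $C/P\subseteq\M(C/P)$, identifies $\|T_P\|_{\cb}$ with $\|u_P\|_h$.
\end{itemize}
This works for every $u$ in the completion and directly in $(C/P)\otimes_h(C/P)$. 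It therefore avoids both the paper's continuity extension from algebraic tensors and the Allen--Sinclair--Smith identification at primitive ideals.

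\textbf{Step 2.} That identification is still needed here. As the paper notes, the Glimm formula in \cite{ArchboldSomersetTimoney2009} is stated for the image of $u$ in $(C\otimes_hC)/(C\otimes_hG+G\otimes_hC)$. The norm there a priori dominates $\|u_G\|_h$, so the substantive inequality $\le$ transfers to the present statement only via \cite[Corollary~2.6]{AllenSinclairSmith1993}.

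\textbf{Item (iii).} Your derivation of (iii) from (ii) is valid, once you use explicitly that $n$-primality implies $k$-primality for $2\le k\le n$; this is needed because the values $\ell^2+1$ skip most integers. It is the same argument the paper later uses for $\Theta_X$, whereas here the paper simply cites \cite[Theorem~3.8(ii)]{ArchboldSomersetTimoney2009}.

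\textbf{A false heuristic in Step 3.} Your proof never uses it, but the claim is wrong. It is not true that injectivity fails exactly when some $u$ has $u_G\ne0$ but $u_P=0$ for all $P\supseteq G$. Nor are the criteria about recovering $\|u_G\|_h$ from the primitive ideals containing $G$. Here is a counterexample.
\begin{itemize}
\item Let $C=\{f\in C([0,1],M_2(\C)):f(0)\text{ is diagonal}\}$, and let $P_1,P_2$ be the kernels of $f\mapsto f(0)_{11}$ and $f\mapsto f(0)_{22}$.
\item Any net of primitive ideals $\ker(f\mapsto f(t))$ with $t\to0$ converges to both $P_1$ and $P_2$. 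Hence $G=P_1\cap P_2$ is a primal Glimm ideal.
\item All other Glimm ideals are primitive, so $\theta_C$ is injective, indeed isometric.
\item Take the constant functions $E_{11}(1),E_{22}(1)\in C$ and $u=E_{11}(1)\otimes_{Z_C}E_{22}(1)$. Then $u_G=e_1\otimes e_2\ne0$ in $\C^2\otimes_h\C^2$, while $u_{P_1}=u_{P_2}=0$.
\item The norm of $u$ is detected by the primitive ideals at $t>0$, which accumulate at $P_1$ and $P_2$.
\end{itemize}
The right comparison is between $\|u_G\|_h$ and limits of $\|u_P\|_h$ along nets of primitive ideals approaching the primitive ideals containing $G$. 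This is why ($n$-)primality, a condition on common limits of such nets, governs (i)--(iii). You should drop or correct that sentence; the citations carry the proof.
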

	
	The norm formulas are \cite[Theorem~3.5 and Proposition~3.6]{ArchboldSomersetTimoney2009}.
	There, $u_I$ denotes the image of $u$ in $(C\otimes_hC)/(C\otimes_hI+I\otimes_hC)$, which is canonically isometrically isomorphic to $(C/I)\otimes_h(C/I)$ by \cite[Corollary~2.6]{AllenSinclairSmith1993}; this gives \eqref{eq:diagonal-quotients} in the notation used here. The primitive-ideal formula extends from algebraic tensors by continuity, since both sides are $1$-Lipschitz with respect to $\|\cdot\|_{Z_C,h}$. Assertion \textup{(i)} is \cite[Theorem~3.8(i)]{ArchboldSomersetTimoney2009}, extending \cite[Corollary~6(ii)]{Somerset1998} to the nonunital case. Assertion \textup{(ii)} is \cite[Theorem~3.9]{ArchboldSomersetTimoney2009}, extending \cite[Theorem~17]{ArchboldSomersetTimoney2005}. The isometry criterion in \textup{(iii)} is \cite[Theorem~3.8(ii)]{ArchboldSomersetTimoney2009}; in the unital case, sufficiency is \cite[Theorem~4]{Somerset1998}, while necessity follows from \cite[Theorem~7]{ArchboldSomersetTimoney2005}. Finally, adding zero summands allows the $\ell$-term statements to be applied to tensors of length at most $\ell$.
	
	\begin{proposition}\label{prop:complete-isometry}
		Let $C$ be a nonzero $C^*$-algebra. If $\theta_C$ is isometric, then $\theta_C$ is completely isometric.
	\end{proposition}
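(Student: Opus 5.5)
The plan is to reduce the $n$th matrix level of $\theta_C$ to the first matrix level of $\theta_{M_n(C)}$, using the column--row identification \eqref{eq:matrix-haagerup}.

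\emph{Step 1: matrix-level isometry.} Suppose $\theta_C$ is isometric. By Theorem~\ref{thm:diagonal}(iii), every Glimm ideal of $C$ is primal. By Lemma~\ref{lem:morita-ideals}, applied to the correspondence $I\mapsto M_n(I)$, every Glimm ideal of $M_n(C)$ is primal. Applying Theorem~\ref{thm:diagonal}(iii) again, now to $M_n(C)$, shows that $\theta_{M_n(C)}$ is isometric for every $n\in\N$. Here the multiplier centre of $M_n(C)$ is identified with $Z_C$ via $z\mapsto zI_n$.

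\emph{Step 2: transport the matrix level.} Fix $n$ and $U\in M_n(C\otimes_{Z_C,h}C)$. By \eqref{eq:matrix-haagerup}, $U$ corresponds completely isometrically to an element $U'\in C_n(C)\otimes_{Z_C,h}R_n(C)$.
\begin{itemize}
\item Let $\iota_c:C_n(C)\to M_n(C)$ place a column in the first column, and let $\iota_r:R_n(C)\to M_n(C)$ place a row in the first row.
\item Both maps are complete isometries and $Z_C$-module maps for the action $zI_n$.
\item Their left inverses, namely compression to the first column and to the first row (multiplication by $E_{11}(1)$ on the appropriate side), are completely contractive $Z_C$-module maps.
\end{itemize}
By the remark following \eqref{eq:module-haagerup-functoriality}, the element $u':=(\iota_c\otimes_{Z_C,h}\iota_r)(U')\in M_n(C)\otimes_{Z_C,h}M_n(C)$ therefore satisfies $\|u'\|_{Z_C,h}=\|U\|$. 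Combining this with Step~1 gives
\[
\|U\|=\|u'\|_{Z_C,h}=\|\theta_{M_n(C)}(u')\|_{\cb}.
\]

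\emph{Step 3: identify the two completely bounded norms.} It remains to show that
\[
\|\theta_{M_n(C)}(u')\|_{\cb}=\|\theta_C^{[n]}(U)\|_{M_n(\CB(C))}.
\]
By \eqref{eq:cb-matrix-identification}, the right-hand side is the cb norm of the map
\[
\phi:C\to M_n(C),\qquad c\mapsto[\theta_C(U_{ij})(c)].
\]
First take $U'$ algebraic, say $U'=\sum_k R_k\otimes S_k$ with columns $R_k$ and rows $S_k$. Then
\[
\theta_{M_n(C)}(u')(x)=\sum_k\iota_c(R_k)\,x\,\iota_r(S_k)=\phi(x_{11}),\qquad x\in M_n(C),
\]
since the product of a first-column matrix, $x$, and a first-row matrix only sees the entry $x_{11}$. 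Thus
\[
\theta_{M_n(C)}(u')=\phi\circ\epsilon,\qquad \phi=\theta_{M_n(C)}(u')\circ\eta,
\]
where $\epsilon:x\mapsto x_{11}$ and $\eta:c\mapsto E_{11}(c)$ are complete contractions. Hence the two cb norms coincide.

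The general case follows by density. Both sides of the equality in Step~3 are continuous in $U$, and algebraic tensors are dense in $C_n(C)\otimes_{Z_C,h}R_n(C)$.

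The main obstacle is Step~2. One must check carefully that the embeddings and their compressions are genuinely $Z_C$-module maps for the identification of centres given by $z\mapsto zI_n$, so that the functoriality \eqref{eq:module-haagerup-functoriality} applies to the balanced tensor product and not only to the unbalanced Haagerup tensor product. One must also check that the isomorphism \eqref{eq:matrix-haagerup} is compatible with the formula used in Step~3; this is routine but should be done entrywise.
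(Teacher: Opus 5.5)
Your proposal is correct and follows essentially the same route as the paper. Primality of Glimm ideals passes to $M_n(C)$ via Lemma~\ref{lem:morita-ideals}. The first-column and first-row embeddings, with their compression left inverses, give a complete isometry $M_n(C\otimes_{Z_C,h}C)\to M_n(C)\otimes_{Z_C,h}M_n(C)$. Finally, the induced operator factors as $\theta_C^{[n]}(U)$ composed with $x\mapsto x_{11}$, and this map has the completely isometric right inverse $c\mapsto E_{11}(c)$, which is exactly the paper's $\kappa_n$ argument.
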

	
	\begin{proof}
		Suppose that $\theta_C$ is isometric and fix $n\in\N$. By Theorem~\ref{thm:diagonal}\textup{(iii)}, every Glimm ideal of $C$ is primal. By Lemma~\ref{lem:morita-ideals}, the same is true of $M_n(C)$, so $\theta_{M_n(C)}$ is isometric by Theorem~\ref{thm:diagonal}\textup{(iii)} applied to $M_n(C)$. By \eqref{eq:corner-centre}, we identify the centre of $\M(M_n(C))$ with $Z_C$ through $z\mapsto zI_n$, $z\in Z_C$.
		
		By the module column--row identification \eqref{eq:matrix-haagerup}, $M_n(C\otimes_{Z_C,h}C)\cong C_n(C)\otimes_{Z_C,h}R_n(C)$ completely isometrically. Embedding $C_n(C)$ as the first column of $M_n(C)$ and $R_n(C)$ as the first row gives completely isometric $Z_C$-module maps, while taking the first column and first row gives completely contractive $Z_C$-module left inverses. Hence \eqref{eq:module-haagerup-functoriality} and the left-inverse criterion following it give a complete isometry
		\[
		\iota_n:M_n(C\otimes_{Z_C,h}C) \to M_n(C)\otimes_{Z_C,h}M_n(C),\qquad 
		\iota_n(E_{ij}(a\otimes_{Z_C}b)):=E_{i1}(a)\otimes_{Z_C}E_{1j}(b),
		\]
		for $a,b\in C$ and $1\le i,j\le n$.
		
		Define a map $\kappa_n:M_n(C)\to C$ by $\kappa_n([c_{ij}]):=c_{11}$. For $U\in M_n(C\otimes_{Z_C,h}C)$, let $T_U:C\to M_n(C)$ denote the map corresponding to $\theta_C^{[n]}(U)\in M_n(\CB(C))$ under \eqref{eq:cb-matrix-identification}. For $a,b\in C$, $1\le i,j\le n$, and $W=[w_{rs}]\in M_n(C)$, we have
		\begin{align*}
			\theta_{M_n(C)}
			\left(\iota_n(E_{ij}(a\otimes_{Z_C}b))\right)(W)  &=E_{i1}(a)WE_{1j}(b)=E_{ij}(aw_{11}b)\\
			&=T_{E_{ij}(a\otimes_{Z_C}b)}(\kappa_n(W)).
		\end{align*}
		Since such elementary matrix tensors linearly span a dense subspace of $M_n(C\otimes_{Z_C,h}C)$, linearity and continuity give
		\begin{equation}\label{eq:known-amplification}
			\theta_{M_n(C)}(\iota_n(U))=T_U\circ\kappa_n.
		\end{equation}
		The complete contraction $\kappa_n$ has the completely isometric right inverse $C\to M_n(C)$, $c\mapsto E_{11}(c)$. Thus composition with $\kappa_n$ preserves completely bounded norms, and
		\[
		\|\theta_C^{[n]}(U)\|
		\stackrel{\eqref{eq:cb-matrix-identification}}{=}
		\|T_U\|_{\cb}
		=\|T_U\circ\kappa_n\|_{\cb}
		\stackrel{\eqref{eq:known-amplification}}{=}
		\|\theta_{M_n(C)}(\iota_n(U))\|_{\cb}
		=\|\iota_n(U)\|=\|U\|.
		\]
		Since $n\in\N$ was arbitrary, $\theta_C$ is completely isometric.
	\end{proof}
	
	\section{Proof of Theorem~\ref{thm:main}}\label{sec:main}
	
	We prove the sufficient implications in Theorem~\ref{thm:main} by applying the single-algebra results to the linking algebra. For the necessary implications, we construct tensors in $A\odot_{Z_X}B$ witnessing failure of injectivity or norm preservation when the corresponding Glimm-ideal condition fails.
	
	\subsection{Full corners and quotient formulas}
	
	We first show that full rectangular corners preserve the relevant tensor norms, algebraic tensor lengths and coefficient-map norms, and establish the corresponding quotient identities. Applied to the linking algebra, these identities give the Glimm- and primitive-quotient formulas for imprimitivity bimodules; for an analogous argument, see \cite[Lemmas~3.7 and~3.11]{ArambasicGogic2020}.
	
	\begin{lemma}\label{lem:corner-calculus}
		Let $C$ be a nonzero $C^*$-algebra and $e,f\in\M(C)$ full projections. Put $E:=eCe$, $F:=fCf$ and $Y:=eCf$, and identify the multiplier centres of $E$ and $F$ with $Z:=Z_C$ through \eqref{eq:corner-centre}. The corner inclusions induce a complete isometry $j:E\otimes_{Z,h}F\to C\otimes_{Z,h}C$ preserving algebraic tensor length. The coefficient action on $Y$ is $Z$-balanced, so \eqref{eq:module-coefficient-map} gives a complete contraction $\Theta_Y:E\otimes_{Z,h}F\to\CB(Y)$, and
		\begin{equation}\label{eq:corner-action-norm}
			\|\Theta_Y^{[n]}(U)\|=\|\theta_C^{[n]}(j^{[n]}(U))\|, \qquad n\in\N, \quad U\in M_n(E\otimes_{Z,h}F).
		\end{equation}
		If $J\in\Id(C)$ is proper and contains a Glimm ideal, then, using the quotient notation of \eqref{eq:balanced-quotient-X},
		\begin{equation}\label{eq:corner-quotient-norm}
			\|(j(u))_J\|_h=\|u_{eJe}\|_h,
			\qquad u\in E\otimes_{Z,h}F.
		\end{equation}
	\end{lemma}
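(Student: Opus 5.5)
The plan is to treat the four assertions in turn, using the corner projections $e,f$ and the two preservation principles already recorded: functoriality \eqref{eq:module-haagerup-functoriality} with its left-inverse criterion, and the column--row identification \eqref{eq:matrix-haagerup}.

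\emph{The complete isometry $j$.} The inclusions $E\hookrightarrow C$ and $F\hookrightarrow C$ are completely isometric. They are $Z$-module maps, since $z\cdot eae=eze\,eae=z\,eae$ for central $z$ and $e$ commutes with $z$. Compressions $c\mapsto ece$ and $c\mapsto fcf$ are completely contractive $Z$-module left inverses. The left-inverse criterion after \eqref{eq:module-haagerup-functoriality} therefore makes $j=\iota_E\otimes_{Z,h}\iota_F$ a complete isometry.

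\emph{Preservation of tensor length.} Clearly $\len(j(u))\le\len(u)$. Conversely, if $j(u)=\sum_{k=1}^r c_k\otimes_Z d_k$ in $C\odot_Z C$, apply the contraction $\epsilon:=(c\mapsto ece)\otimes_{Z,h}(d\mapsto fdf)$. Since $\epsilon\circ j=\mathrm{id}$, this gives $u=\sum_k ec_ke\otimes_Z fd_kf$, a representation of length at most $r$.

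\emph{Balancing and \eqref{eq:corner-action-norm}.} Balancing on $Y$ follows from $z(eyf)=eyf\,z$, because $z$ is central in $\M(C)$. For the norm identity, take $U\in M_n(E\otimes_{Z,h}F)$ and let $T:C\to M_n(C)$ correspond to $\theta_C^{[n]}(j^{[n]}(U))$. For $c\in C$ we have $T(c)=T(ecf)$, because the left coefficients lie in $eCe$ and the right ones in $fCf$; on elementary tensors, $ace b=a(ecf)b$ when $a=eae$ and $b=fbf$. So $T=T|_Y\circ\kappa$, where $\kappa(c)=ecf$ is a complete contraction $C\to Y$, and $T|_Y$ is exactly the map corresponding to $\Theta_Y^{[n]}(U)$. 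Hence $\|T\|_{\cb}\le\|T|_Y\|_{\cb}$. The reverse inequality holds because $Y\subseteq C$ completely isometrically and $T|_Y$ takes values in $M_n(Y)\subseteq M_n(C)$, again completely isometrically. By \eqref{eq:cb-matrix-identification}, this gives \eqref{eq:corner-action-norm}. As in the proof of Proposition~\ref{prop:complete-isometry}, one checks this on elementary tensors and extends by density and continuity.

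\emph{The quotient identity \eqref{eq:corner-quotient-norm}.} This is the main obstacle. By Lemma~\ref{lem:morita-ideals} and the Rieffel correspondence, $eJe$ is proper and contains a Glimm ideal of $E$, and $fJf$ is its Rieffel correspondent for the $E$--$F$ bimodule $Y$. So $u_{eJe}$ lives in $(E/eJe)\otimes_h(F/fJf)$. Identify $E/eJe$ with the corner $\bar e(C/J)\bar e$ and $F/fJf$ with $\bar f(C/J)\bar f$, where $\bar e,\bar f$ are the images in $\M(C/J)$. The canonical maps $E/eJe\to C/J$ and $F/fJf\to C/J$ are injective $*$-homomorphisms, hence complete isometries. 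Haagerup injectivity \eqref{eq:haagerup-injectivity} then gives $\|u_{eJe}\|_h=\|(\iota\otimes\iota)(u_{eJe})\|_h$ in $(C/J)\otimes_h(C/J)$, and $(\iota\otimes\iota)(u_{eJe})=(j(u))_J$ follows by checking on elementary tensors $a\otimes_Z b$ and passing to limits. The delicate point is to verify that $eJe$ and $fJf$ are the correct kernels, so that the quotient maps are compatible and the displayed square commutes; this is where the full-corner correspondence $J\mapsto eJe$ from Section~\ref{subsec:ideals} is used.
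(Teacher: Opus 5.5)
Your proposal is correct and follows essentially the same route as the paper: left-inverse functoriality gives that $j$ is completely isometric and preserves length; the factorization $\theta_C^{[n]}(j^{[n]}(U))=\Theta_Y^{[n]}(U)\circ\kappa$ with $\kappa(c)=ecf$, together with restriction to $Y$, gives \eqref{eq:corner-action-norm}; and Haagerup injectivity gives the quotient identity. The point you flag as delicate is settled in two lines, as in the paper: $J\cap E=eJe$ and $J\cap F=fJf$ make $E/eJe\to C/J$ and $F/fJf\to C/J$ injective, while fullness of $e$ gives $\cspan(CeJeC)=J$ and hence $(eJe)^Y=\cspan(fCeJeCf)=fJf$.
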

	
	\begin{proof}
		The corner inclusions $E\hookrightarrow C$ and $F\hookrightarrow C$ are injective $*$-homomorphisms and hence completely isometric $Z$-module maps. They have the completely contractive $Z$-module left inverses $C\to E$, $c\mapsto ece$, and $C\to F$, $c\mapsto fcf$, respectively. Hence \eqref{eq:module-haagerup-functoriality} and the left-inverse criterion following it show that the induced map $j$ is completely isometric. Both $j$ and the tensor map induced by these left inverses send elementary tensors to elementary tensors. For $u\in E\odot_Z F$, applying these maps to finite representations of $u$ and $j(u)$, respectively, gives $\len(j(u))\le\len(u)$ and $\len(u)\le\len(j(u))$.
		
		Define $\kappa:C\to Y$ by $\kappa(c):=ecf$ for $c\in C$. Let $r\in\N$, $a_k\in E$ and $b_k\in F$ for $1\le k\le r$, and let $u=\sum_{k=1}^r a_k\otimes_Z b_k\in E\odot_Z F$. Then, for $c\in C$,
		\[
		\theta_C(j(u))(c)=\sum_{k=1}^r a_kcb_k=\sum_{k=1}^r a_k(ecf)b_k=\Theta_Y(u)(\kappa(c)).
		\]
		By continuity, $\theta_C(j(u))=\Theta_Y(u)\circ\kappa$ for every $u\in E\otimes_{Z,h}F$, with $\theta_C(j(u))$ regarded as a map from $C$ into $Y$. Since $\kappa(y)=y$ for $y\in Y$, it follows that $\Theta_Y(u)=\theta_C(j(u))|_Y$. For $n\in\N$ and $U\in M_n(E\otimes_{Z,h}F)$, use \eqref{eq:cb-matrix-identification} to regard $\Theta_Y^{[n]}(U)$ as a map $Y\to M_n(Y)$ and $\theta_C^{[n]}(j^{[n]}(U))$ as a map $C\to M_n(Y)$. The latter change of codomain leaves the completely bounded norm unchanged, since $Y$ has the matrix norms inherited from $C$. These maps satisfy $\theta_C^{[n]}(j^{[n]}(U))=\Theta_Y^{[n]}(U)\circ\kappa$. Since $\kappa$ is a complete contraction, it follows that
		\[
		\|\theta_C^{[n]}(j^{[n]}(U))\|
		\le \|\Theta_Y^{[n]}(U)\|.
		\]
		Conversely,
		$\Theta_Y^{[n]}(U)=\theta_C^{[n]}(j^{[n]}(U))|_Y$, and the inclusion
		$Y\hookrightarrow C$ is completely isometric, so
		\[
		\|\Theta_Y^{[n]}(U)\| \le \|\theta_C^{[n]}(j^{[n]}(U))\|.
		\]
		This proves \eqref{eq:corner-action-norm}.
		
		The ideals $eJe$ and $fJf$ correspond under $Y$. Indeed, since $e$ is full, 
		\[
		\cspan(CeJeC)=\cspan\left((CeC)J(CeC)\right)=J,
		\]
		and hence
		\[
		(eJe)^Y=\cspan(fCeJeCf)=f\,\cspan(CeJeC)\,f=fJf.
		\] 
		Since $J$ contains a Glimm ideal, both contain Glimm ideals by Lemma~\ref{lem:morita-ideals}. Hence the quotient maps are well defined by \eqref{eq:balanced-quotient-X}. The inclusions into $C/J$ induce injective $*$-homomorphisms $E/eJe\to C/J$ and $F/fJf\to C/J$, because $J\cap E=eJe$ and $J\cap F=fJf$. These maps are therefore completely isometric, so their Haagerup tensor product is completely isometric by \eqref{eq:haagerup-injectivity} and sends $u_{eJe}$ to $(j(u))_J$. This proves \eqref{eq:corner-quotient-norm}.
	\end{proof}
	
	\begin{proposition}
		For a nonzero imprimitivity bimodule $X={}_AX_B$ and every $u\in A\otimes_{Z_X,h}B$,
		\begin{equation}\label{eq:offdiag-quotients}
			\|u\|_{Z_X,h}=\sup_{G\in\Glimm(A)}\|u_G\|_h,
			\qquad \|\Theta_X(u)\|_{\cb}=\sup_{P\in\Prim(A)}\|u_P\|_h.
		\end{equation}
	\end{proposition}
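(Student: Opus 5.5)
We transfer both formulas to the linking algebra $D=\Link(X)$ and apply Theorem~\ref{thm:diagonal} there. Take $C:=D$ and $e:=p$, $f:=q$ in Lemma~\ref{lem:corner-calculus}, so that $E=A$, $F=B$ and $Y=X$. The centres are identified with $Z_X$ via \eqref{eq:corner-centre}, and $\Theta_Y=\Theta_X$. The lemma gives a complete isometry $j:A\otimes_{Z_X,h}B\to D\otimes_{Z_X,h}D$, so $\|u\|_{Z_X,h}=\|j(u)\|_{Z_D,h}$. By \eqref{eq:corner-action-norm} with $n=1$, we also have $\|\Theta_X(u)\|_{\cb}=\|\theta_D(j(u))\|_{\cb}$.

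Theorem~\ref{thm:diagonal} applied to $D$ then gives
\[
\|u\|_{Z_X,h}=\sup_{\mathcal G\in\Glimm(D)}\|(j(u))_{\mathcal G}\|_h
\qquad\text{and}\qquad
\|\Theta_X(u)\|_{\cb}=\sup_{\mathcal P\in\Prim(D)}\|(j(u))_{\mathcal P}\|_h.
\]
It remains to reindex these suprema by ideals of $A$. The correspondence $I\mapsto\widehat I$ is inverse to $J\mapsto pJp$, and it is a bijection $\Id(A)\to\Id(D)$. By Lemma~\ref{lem:morita-ideals} it maps $\Glimm(A)$ onto $\Glimm(D)$. By the discussion preceding Lemma~\ref{lem:morita-ideals}, it maps $\Prim(A)$ onto $\Prim(D)$. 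Each Glimm ideal and each primitive ideal of $D$ is proper and contains a Glimm ideal; a primitive ideal contains the Glimm ideal of its class. So \eqref{eq:corner-quotient-norm} applies with $J=\widehat G$ or $J=\widehat P$, and since $p\widehat Ip=I$ it gives $\|(j(u))_{\widehat I}\|_h=\|u_I\|_h$.

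We must also check that the quotient $u_I$ appearing in \eqref{eq:corner-quotient-norm} is the one defined by \eqref{eq:balanced-quotient-X} for $X$. Concretely, the corresponding ideal of $B$ must be $q\widehat Iq=I^X$. The proof of Lemma~\ref{lem:corner-calculus} shows that $pJp$ and $qJq$ correspond under $Y=pDq=X$. With $J=\widehat I$ this gives $(p\widehat Ip)^X=q\widehat Iq$, that is, $I^X=q\widehat Iq$, which is also visible directly from the definition of $\widehat I$. Substituting into the two suprema yields \eqref{eq:offdiag-quotients}.

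The only real obstacle is this bookkeeping. The centre identifications used for $D\otimes_{Z_D,h}D$ must match those defining $A\otimes_{Z_X,h}B$, and the Glimm/primitive indexing must agree on both sides. Both points are supplied by \eqref{eq:corner-centre}, by Lemma~\ref{lem:morita-ideals}, and by the convention that $Z_X$ is identified with $Z(\M(D))$ through compression by $p$ and $q$. Everything else is a direct citation.
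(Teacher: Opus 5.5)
Your proposal is correct and follows the paper's proof essentially verbatim: embed via the corner map $j$ of Lemma~\ref{lem:corner-calculus} with $(C,e,f)=(D,p,q)$, apply \eqref{eq:diagonal-quotients} in $D$, and reindex the suprema through $I\mapsto\widehat I$ using \eqref{eq:corner-quotient-norm} together with $p\widehat Ip=I$ and $q\widehat Iq=I^X$. Your extra check that $q\widehat Iq$ is indeed the Rieffel correspondent $I^X$ is a step the paper simply asserts.
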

	
	\begin{proof}
		Fix $u\in A\otimes_{Z_X,h}B$. Set $D:=\Link(X)$, with the canonical linking projections $p,q\in\M(D)$, and let $j:A\otimes_{Z_X,h}B\to D\otimes_{Z_D,h}D$ be the embedding of Lemma~\ref{lem:corner-calculus}; under \eqref{eq:corner-centre}, $Z_D=Z_X$. The Rieffel correspondences and Lemma~\ref{lem:morita-ideals} identify the primitive and Glimm ideals of $A$ with those of $D$ by $I\mapsto\widehat I$. If $I\in\Glimm(A)\cup\Prim(A)$, then $\widehat I$ is proper and contains a Glimm ideal of $D$, while $p\widehat I p=I$ and $q\widehat I q=I^X$. Thus \eqref{eq:corner-quotient-norm} gives $\|(j(u))_{\widehat I}\|_h=\|u_I\|_h$. Hence
		\begin{align*}
			\|u\|_{Z_X,h}&=\|j(u)\|_{Z_D,h}
			\stackrel{\eqref{eq:diagonal-quotients}}{=}
			\sup_{G\in\Glimm(A)}\|(j(u))_{\widehat G}\|_h
			\stackrel{\eqref{eq:corner-quotient-norm}}{=}
			\sup_{G\in\Glimm(A)}\|u_G\|_h,\\
			\|\Theta_X(u)\|_{\cb}&\stackrel{\eqref{eq:corner-action-norm}}{=}
			\|\theta_D(j(u))\|_{\cb}
			\stackrel{\eqref{eq:diagonal-quotients}}{=}
			\sup_{P\in\Prim(A)}\|(j(u))_{\widehat P}\|_h
			\stackrel{\eqref{eq:corner-quotient-norm}}{=}
			\sup_{P\in\Prim(A)}\|u_P\|_h.
		\end{align*}
	\end{proof}
	
	\subsection{Finite-coordinate transfer}
	
	To prove the necessity statements in Theorem~\ref{thm:main}, we need tensors in $A\odot_{Z_X}B$ that witness failure of injectivity or norm preservation, rather than tensors whose coefficients merely lie in the linking algebra. We therefore construct finite families of positive elements $c_j\in A$ and $d_j\in B$, indexed by the same set, such that for every $P\in\Prim(A)$ some index $j$ satisfies $c_j\in P$ and $d_j\in P^X$.
	
	For $N\in\N$, set $[N]:=\{1,\ldots,N\}$ and let $e_1,\ldots,e_N\in\C^N$ be the coordinate vectors, with $\C^N$ regarded as a $C^*$-algebra with the supremum norm. For $S\subseteq[N]$, let $\C^S$ be the coordinate algebra indexed by $S$, and let $\rho_S:\C^N\to\C^S$ be coordinate restriction, with $\C^\emptyset:=\{0\}$, and set 
	\[
	w_S:=(\rho_S\otimes\rho_S)(w), \qquad \text{for } w\in\C^N\odot\C^N.
	\]
	\begin{proposition}\label{prop:transport-obstruction}
		Let $X={}_AX_B$ be a nonzero imprimitivity bimodule, let $N\in\N$, $N\geq2$, and let $G\in\Glimm(A)$ be a Glimm ideal which is not $N$-primal. If $N\geq3$, assume that $G$ is $(N-1)$-primal. There exists a complete isometry $\Lambda:\C^N\otimes_h\C^N\to A\otimes_{Z_X,h}B$ with range contained in $A\odot_{Z_X}B$ such that, for every $w\in\C^N\odot\C^N$,
		\begin{equation}\label{eq:coordinate-transfer}
			\len(\Lambda(w))\le\len(w),\qquad
			\|\Theta_X(\Lambda(w))\|_{\cb}\le\max_{S\subsetneq[N]}\|w_S\|_h.
		\end{equation}
	\end{proposition}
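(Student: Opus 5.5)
We construct $\Lambda$ from matched families of orthogonal positive elements. Since $G$ is not $N$-primal, there are ideals $I_1,\ldots,I_N\in\Id(A)$ with $I_1\cdots I_N=0$ and $I_j\not\subseteq G$ for every $j$. If $N\geq3$, $(N-1)$-primality of $G$ lets us arrange that, for every proper subset $S\subsetneq[N]$, the ideals indexed by $S$ have intersection not contained in $G$. Passing to $A/G$, the images $(I_j+G)/G$ are nonzero ideals. Following the construction in \cite[proof of Theorem~7]{ArchboldSomersetTimoney2005}, we choose pairwise orthogonal positive norm-one elements $\bar c_j\in(I_j'+G)/G$. Here the auxiliary ideals $I_j'$ are chosen so that distinct ones become orthogonal modulo $G$; for instance, $I_j'=\bigcap_{k\ne j}I_k$, whose pairwise products lie in $I_1\cdots I_N=0$. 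The $(N-1)$-primality hypothesis guarantees $I_j'\not\subseteq G$, and for $N=2$ this condition is automatic. Lemma~\ref{lem:orthogonal-lifting} then gives pairwise orthogonal positive norm-one lifts $c_j\in I_j'$ with $c_j+G=\bar c_j$. Transport to $B$ by the Rieffel correspondence: $G^X\in\Glimm(B)$ by Lemma~\ref{lem:morita-ideals}, and we lift in the same way to orthogonal positive norm-one elements $d_j\in(I_j')^X$. The images $d_j+G^X$ are chosen as the Morita transports of the $\bar c_j$, for example via $\langle x,\bar c_j x\rangle$ suitably normalized, or simply chosen orthogonal in $(I_j'^X+G^X)/G^X$, which are again pairwise orthogonal ideals.

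Define $\Lambda(e_i\otimes e_k):=c_i\otimes_{Z_X}d_k$, that is, $\Lambda$ is $\alpha\otimes\beta$ with $\alpha(e_i)=c_i$ and $\beta(e_k)=d_k$. Because the $c_i$ are orthogonal positive contractions, $\alpha:\C^N\to A$ is an injective $*$-homomorphism, hence completely isometric, and similarly for $\beta$. So $\Lambda$ is completely contractive into $A\otimes_{Z_X,h}B$. Its range lies in $A\odot_{Z_X}B$, and length does not increase, since a representation $w=\sum_j s_j\otimes t_j$ maps to $\sum_j\alpha(s_j)\otimes_{Z_X}\beta(t_j)$.

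For the reverse norm inequality, compose with the quotient $q_G$ of \eqref{eq:balanced-quotient-X}. The maps $\bar\alpha=\pi_G\circ\alpha$ and $\bar\beta=\pi_{G^X}\circ\beta$ are still injective $*$-homomorphisms, because $\bar c_j$ and $\bar d_j$ are nonzero orthogonal projection-like elements of norm one. By \eqref{eq:haagerup-injectivity} they give a complete isometry $\C^N\otimes_h\C^N\to(A/G)\otimes_h(B/G^X)$. Since $q_G$ is a complete contraction, we get $\|W\|\le\|q_G^{[n]}\Lambda^{[n]}(W)\|\le\|\Lambda^{[n]}(W)\|\le\|W\|$, so $\Lambda$ is completely isometric.

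The cb estimate uses \eqref{eq:offdiag-quotients}: $\|\Theta_X(\Lambda(w))\|_{\cb}=\sup_{P}\|\Lambda(w)_P\|_h$. Fix $P\in\Prim(A)$. Since $P$ is prime and $I_1\cdots I_N=0\subseteq P$, some $I_j\subseteq P$. Then $c_j\in I_j'$, and we need $c_j\in P$; this requires choosing $c_j$ in $I_j$ itself, i.e. arranging $I_j'\subseteq I_j$, which holds for the choice above provided the lifting is done inside $I_j$ when $N=2$ and inside $I_j'$ when $N\geq3$. We also get $d_j\in I_j^X\subseteq P^X$. Let $S$ be the set of indices $j$ with $c_j\notin P$; it is a proper subset of $[N]$. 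The map $\C^N\to A/P$, $e_i\mapsto c_i+P$, factors through $\rho_S$ followed by a $*$-homomorphism $\C^S\to A/P$, which is completely contractive, and similarly on the $B$ side, where $d_i\in P^X$ whenever $c_i\in P$. Hence $\|\Lambda(w)_P\|_h\le\|w_S\|_h$. A subtle point is that $S$ must be taken as the set where $c_i\notin P$, while the $B$ side must vanish at the complementary indices. This holds for both sides because $c_i\in P$ exactly when $I_i'\subseteq P$ on the lifted supports, which are matched by the Rieffel correspondence; if needed, we replace $S$ by the set where either element survives. The main obstacle is this bookkeeping: choosing the supporting ideals so that the vanishing of $c_j$ and of $d_j$ at each primitive ideal is simultaneous, while the $(N-1)$-primality keeps every $\bar c_j$ nonzero.
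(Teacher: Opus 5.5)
\textbf{There is a genuine gap for $N\ge3$, in the choice of supporting ideals.} You replace $I_j$ by $I_j'=\bigcap_{k\ne j}I_k$, so that $I_i'I_j'=0$ in $A$. You then claim that $(N-1)$-primality gives $I_j'\not\subseteq G$. It gives the opposite. For $N\ge3$, $G$ is $(N-1)$-primal and hence $2$-primal, so two ideals with zero product in $A$ cannot both avoid $G$. Concretely, for distinct $i,l\in[N]\setminus\{j\}$ one has $I_j'\subseteq I_i\cap I_l=I_iI_l\subseteq G$, by the argument in the next paragraph. Hence $(I_j'+G)/G=0$, there is no norm-one $\bar c_j$ to lift, and the isometry argument through $q_G$ collapses.

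Your preliminary claim that every proper subfamily of the $I_j$ has intersection outside $G$ fails for the same reason once the subfamily has two members. The primitive-ideal bookkeeping is also unresolved. For your choice, $I_j'\subseteq I_j$ fails in general, and lifting inside $I_j$ for $N=2$ but inside $I_j'$ for $N\ge3$ is not a consistent construction. You flag this as the main obstacle but do not settle it.

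\textbf{The missing idea is that orthogonality is only needed modulo $G$, and $(N-1)$-primality already gives this for the original ideals.} For distinct $i,j$, the $N-1$ ideals $I_iI_j$ and $I_k$, $k\in[N]\setminus\{i,j\}$, have product $I_1\cdots I_N=0$ by \eqref{eq:ideal-products}. No $I_k$ lies in $G$, so $I_iI_j\subseteq G$. Thus the nonzero ideals $(I_j+G)/G$ are pairwise orthogonal in $A/G$. The Rieffel correspondence preserves order and intersections, hence products, so the same holds for the $(I_j^X+G^X)/G^X$ in $B/G^X$.

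Choose positive norm-one elements in these quotient ideals and lift them by Lemma~\ref{lem:orthogonal-lifting} inside $I_j$ and $I_j^X$ themselves. Then for $P\in\Prim(A)$, every $j$ with $I_j\subseteq P$ satisfies both $c_j\in P$ and $d_j\in I_j^X\subseteq P^X$. So $S(P):=\{j:I_j\not\subseteq P\}$ is a proper subset of $[N]$, and your factorization estimate $\|(\Lambda(w))_P\|_h\le\|w_{S(P)}\|_h$ goes through.

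\textbf{A smaller correction.} The map $e_j\mapsto c_j$ is not a $*$-homomorphism, since the $c_j$ need not be projections; the same applies to $\bar\alpha$, $\bar\beta$ and the maps $\C^{S(P)}\to A/P$. Complete isometry of $\alpha$, and likewise of $\beta,\bar\alpha,\bar\beta$, must be checked directly from orthogonality. For $U_j\in M_s(\C)$, one has $\|\sum_jU_j\otimes c_j\|^2=\|\sum_jU_j^*U_j\otimes c_j^2\|=\max_j\|U_j\|^2$. The maps $\C^{S(P)}\to A/P$ are merely complete contractions, which is all the estimate needs.
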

	
	\begin{proof}
		We adapt the orthogonal-lifting argument in \cite[Theorem~7]{ArchboldSomersetTimoney2005} and \cite[Theorem~5.1]{ArchboldSomersetTimoney2009} to $A$ and $B$, using corresponding ideals under the Rieffel correspondence. Choose $I_1,\ldots,I_N\in\Id(A)$ with $I_1\cdots I_N=0$ and $I_j\not\subseteq G$ for $1\le j\le N$. For $N=2$, $I_1I_2=0$. Suppose that $N\geq3$ and that $i,j\in[N]$ are distinct. The $N-1$ ideals $I_iI_j$ and $I_k$, $k\in[N]\setminus\{i,j\}$, have zero product by \eqref{eq:ideal-products}. Since no $I_k$, $k\in[N]\setminus\{i,j\}$, is contained in $G$, $(N-1)$-primality forces $I_iI_j\subseteq G$. Thus the ideals $(I_j+G)/G$, $1\le j\le N$, are nonzero and have pairwise zero products. The Rieffel order isomorphism preserves inclusions and intersections and hence, by \eqref{eq:ideal-products}, products. Therefore $I_j^X\not\subseteq G^X$ for $1\le j\le N$ and $I_i^XI_j^X\subseteq G^X$ for distinct $i,j\in[N]$. Hence the ideals $(I_j^X+G^X)/G^X$, $1\le j\le N$, are also nonzero and have pairwise zero products.
		
		For $1\le j\le N$, choose positive norm-one elements $\bar c_j\in(I_j+G)/G$ and $\bar d_j\in(I_j^X+G^X)/G^X$. On each side these elements are pairwise orthogonal. Applying Lemma~\ref{lem:orthogonal-lifting} in $A$ and in $B$ gives positive norm-one lifts $c_j\in I_j$ and $d_j\in I_j^X$ with $c_j+G=\bar c_j$ and $d_j+G^X=\bar d_j$, pairwise orthogonal on each side. Let $\alpha:\C^N\to A$, $\beta:\C^N\to B$, $\bar\alpha:\C^N\to A/G$ and $\bar\beta:\C^N\to B/G^X$ be the linear maps defined by
		\begin{align*}
			\alpha(e_j):=c_j,  & \qquad  \beta(e_j):=d_j,\\
			\bar\alpha(e_j):=\bar c_j,  & \qquad  \bar\beta(e_j):=\bar d_j.
		\end{align*}
		Since the images of the coordinate vectors under each of the maps $\alpha$, $\beta$, $\bar\alpha$ and $\bar\beta$ are pairwise orthogonal positive norm-one elements, it suffices to show that $\alpha$ is completely isometric. Fix $s\in\N$ and $U_1,\ldots,U_N\in M_s(\C)$. Since $c_ic_j=0$ for distinct $i,j\in[N]$, the positive elements $U_j^*U_j\otimes c_j^2\in M_s(A)$, $1\le j\le N$, are pairwise orthogonal, and hence
		\begin{align*}
			\left\|\alpha^{[s]}\left(\sum_{j=1}^N U_j\otimes e_j\right)\right\|^2
			&=\left\|\sum_{j=1}^N U_j\otimes c_j\right\|^2
			=\left\|\sum_{j=1}^N U_j^*U_j\otimes c_j^2\right\|
			=\max_{1\le j\le N}\|U_j\|^2\\
			&=\left\|\sum_{j=1}^N U_j\otimes e_j\right\|^2.
		\end{align*}
		Thus $\alpha^{[s]}$ is isometric. Since $s\in\N$ was arbitrary, $\alpha$
		is completely isometric. The same calculation with $d_j$, $\bar c_j$ and
		$\bar d_j$ in place of $c_j$ shows that $\beta$, $\bar\alpha$ and
		$\bar\beta$ are completely isometric. Composing $\alpha\otimes_h\beta$ with
		the quotient map $A\otimes_hB\to A\otimes_{Z_X,h}B$ gives a complete
		contraction $\Lambda$; explicitly,
		\[
		\Lambda(e_i\otimes e_j):=c_i\otimes_{Z_X}d_j, \qquad i,j\in[N].
		\]
		For $i,j\in[N]$,
		\[
		q_G(\Lambda(e_i\otimes e_j))\stackrel{\eqref{eq:balanced-quotient-X}}{=}
		(c_i+G)\otimes(d_j+G^X)=\bar c_i\otimes\bar d_j.
		\]
		Hence, by linearity and continuity,
		\begin{equation}\label{eq:coordinate-quotient}
			q_G\circ\Lambda=\bar\alpha\otimes_h\bar\beta.
		\end{equation}
		Thus, for $s\in\N$ and $W\in M_s(\C^N\otimes_h\C^N)$,
		\[
		\|W\| \stackrel{\eqref{eq:haagerup-injectivity}}{=}
		\|(\bar\alpha\otimes_h\bar\beta)^{[s]}(W)\|
		\stackrel{\eqref{eq:coordinate-quotient}}{=}
		\|(q_G\circ\Lambda)^{[s]}(W)\|\le\|\Lambda^{[s]}(W)\|\le\|W\|.
		\]
		Hence $\Lambda$ is completely isometric. Its range is contained in $A\odot_{Z_X}B$ because its finite-dimensional domain equals $\C^N\odot\C^N$ as a vector space.
		
		Since $\Lambda(x\otimes y)=\alpha(x)\otimes_{Z_X}\beta(y)$ for $x,y\in\C^N$, every representation of $w\in\C^N\odot\C^N$ by elementary tensors gives a representation of $\Lambda(w)$ with the same number of terms. Hence $\len(\Lambda(w))\le\len(w)$.
		
		Fix $w\in\C^N\odot\C^N$. For $P\in\Prim(A)$, set 
		\[
		S(P):=\{j\in[N]:I_j\not\subseteq P\}.
		\]
		Since primitive ideals are prime and $I_1\cdots I_N=0\subseteq P$, repeated application of primeness gives $I_j\subseteq P$ for some $j\in[N]$, and hence $S(P)\subsetneq[N]$. If $j\in[N]\setminus S(P)$, the Rieffel correspondence gives $I_j^X\subseteq P^X$, so both $c_j+P$ and $d_j+P^X$ vanish. Define
		\begin{align*}
			\alpha_P:\C^{S(P)}\to A/P,  &\qquad \alpha_P(e_j):=c_j+P,\\
			\beta_P:\C^{S(P)}\to B/P^X, & \qquad  \beta_P(e_j):=d_j+P^X,
		\end{align*}
		for $j\in S(P)$, where $e_j$ also denotes the corresponding coordinate vector in $\C^{S(P)}$. When $S(P)=\emptyset$, both maps are zero. Each is a complete contraction, since it is obtained by extending coordinates by zero into $\C^N$, applying $\alpha$ or $\beta$, and then passing to $A/P$ or $B/P^X$, respectively. If $\pi_P:A\to A/P$ and $\pi_{P^X}:B\to B/P^X$ are the quotient maps, then
		\[
		\pi_P\circ\alpha=\alpha_P\circ\rho_{S(P)}
		\qquad\text{and}\qquad
		\pi_{P^X}\circ\beta=\beta_P\circ\rho_{S(P)}.
		\]
		Combining these identities with the definition of $\Lambda$, we obtain
		\[
		q_P\circ\Lambda \stackrel{\eqref{eq:balanced-quotient-X}}{=}  
		(\alpha_P\otimes_h\beta_P)\circ(\rho_{S(P)}\otimes_h\rho_{S(P)}).
		\]
		Evaluating at $w$ gives
		\[
		(\Lambda(w))_P
		=(\alpha_P\otimes_h\beta_P)(w_{S(P)}).
		\]
		Therefore
		\[
		\|(\Lambda(w))_P\|_h\le\|w_{S(P)}\|_h
		\le\max_{S\subsetneq[N]}\|w_S\|_h.
		\]
		Taking the supremum in \eqref{eq:offdiag-quotients} proves \eqref{eq:coordinate-transfer}.
	\end{proof}
	
	\subsection{Finite-dimensional obstruction and proof of the main theorem}
	
	The remaining ingredient for the necessary implications is a finite-dimensional tensor whose Haagerup norm strictly decreases under every proper coordinate restriction. More precisely, if $N,m\in\N$ and $2\le N\le m^2+1$, there exists $w\in\C^N\odot\C^N$ with
	\begin{equation}\label{eq:finite-obstruction}
		\len(w)\le m,\qquad \|w\|_h=1,\qquad
		\max_{S\subsetneq[N]}\|w_S\|_h<1.
	\end{equation}
	For $N=2$, take $w:=e_1\otimes e_2\in\C^2\odot\C^2$. Then $\len(w)=1\le m$, $\|w\|_h=1$, and $w_S=0$ for every $S\subsetneq[2]$. Now suppose that $N\geq3$, and let $k\in\N$ be least such that $N-1\le k^2$. Then $2\le k\le m$ and
	\[
	(k-1)^2+2\le N\le k^2+1.
	\]
	Hence \cite[Lemma~16]{ArchboldSomersetTimoney2005} gives $w\in\C^N\odot\C^N$ with $\len(w)\le k\le m$, $\|w\|_h=1$, and $\|w_S\|_h<1$ for every nonempty proper subset $S\subsetneq[N]$. Since $w_\emptyset=0$, \eqref{eq:finite-obstruction} follows.
	
	\begin{proof}[Proof of Theorem~\ref{thm:main}]
		Put $D:=\Link(X)$ and $Z:=Z_X$, and let $p,q\in\M(D)$ be the full linking projections. Let $j:A\otimes_{Z,h}B\to D\otimes_{Z,h}D$ be the embedding from Lemma~\ref{lem:corner-calculus}, applied with $C=D$, $e=p$ and $f=q$. By Lemma~\ref{lem:morita-ideals}, each condition on Glimm ideals in the theorem is equivalent for $A$, $B$ and $D$.
		
		\smallskip
		
		\emph{Sufficiency.} Suppose that every Glimm ideal of $A$ is $2$-primal. Then every Glimm ideal of $D$ is $2$-primal, so $\theta_D$ is injective by Theorem~\ref{thm:diagonal}\textup{(i)}. If $u\in A\otimes_{Z,h}B$ and $\Theta_X(u)=0$, then
		\[
		\|\theta_D(j(u))\|_{\cb} \stackrel{\eqref{eq:corner-action-norm}}{=} \|\Theta_X(u)\|_{\cb}=0.
		\]
		Hence $j(u)=0$, and the isometry of $j$ gives $u=0$. This proves sufficiency in \textup{(i)}.
		
		For \textup{(ii)}, fix $\ell\in\N$ and suppose that every Glimm ideal of $A$ is $(\ell^2+1)$-primal. The same holds for $D$. If $u\in A\odot_Z B$ and $\len(u)\le\ell$, Lemma~\ref{lem:corner-calculus} gives $\len(j(u))=\len(u)\le\ell$. Therefore Theorem~\ref{thm:diagonal}\textup{(ii)} and \eqref{eq:corner-action-norm} give
		\[
		\|u\|_{Z,h}=\|j(u)\|_{Z,h}  \stackrel{\text{Theorem~\ref{thm:diagonal}\textup{(ii)}}}{=}
		\|\theta_D(j(u))\|_{\cb}  \stackrel{\eqref{eq:corner-action-norm}}{=} \|\Theta_X(u)\|_{\cb}.
		\]
		
		For \textup{(iii)}, if every Glimm ideal of $A$ is primal, the same is true of $D$. Hence $\theta_D$ is isometric by Theorem~\ref{thm:diagonal}\textup{(iii)} and therefore completely isometric by Proposition~\ref{prop:complete-isometry}. For $n\in\N$ and $U\in M_n(A\otimes_{Z,h}B)$, the complete isometry of $j$ and \eqref{eq:corner-action-norm} give
		\[
		\|\Theta_X^{[n]}(U)\|  \stackrel{\eqref{eq:corner-action-norm}}{=}
		\|\theta_D^{[n]}(j^{[n]}(U))\|=\|j^{[n]}(U)\|=\|U\|.
		\]
		Thus $\Theta_X$ is completely isometric, and hence isometric.
		
		\smallskip
		
		\emph{Necessity.} If $G\in\Glimm(A)$ is not $2$-primal, take the map $\Lambda$ from Proposition~\ref{prop:transport-obstruction} with $N=2$ and put $u:=\Lambda(e_1\otimes e_2)\in A\odot_Z B$. As noted after \eqref{eq:finite-obstruction}, $\|e_1\otimes e_2\|_h=1$ and $(e_1\otimes e_2)_S=0$ for every $S\subsetneq[2]$. Since $\Lambda$ is isometric, $\|u\|_{Z,h}=1$, while \eqref{eq:coordinate-transfer} gives $\|\Theta_X(u)\|_{\cb}=0$. Hence $u\ne 0$ and $\Theta_X(u)=0$, so $\Theta_X$ is not injective. This proves necessity in \textup{(i)}.
		
		For necessity in \textup{(ii)}, fix $\ell\in\N$ and suppose that $G\in\Glimm(A)$ is not $(\ell^2+1)$-primal. Let $N\in\N$, $N\geq2$, be least such that $G$ is not $N$-primal. Then $N\le\ell^2+1$, and if $N\geq3$, minimality gives $(N-1)$-primality. Take $\Lambda$ as in Proposition~\ref{prop:transport-obstruction} and choose $w\in\C^N\odot\C^N$ as in \eqref{eq:finite-obstruction} with $m:=\ell$. Put $u:=\Lambda(w)\in A\odot_Z B$. By
		\eqref{eq:coordinate-transfer}, \eqref{eq:finite-obstruction} and the
		isometry of $\Lambda$,
		\[
		\len(u)
		\stackrel{\eqref{eq:coordinate-transfer}}{\le} \len(w)
		\stackrel{\eqref{eq:finite-obstruction}}{\le}\ell,
		\qquad
		\|u\|_{Z,h}=\|w\|_h
		\stackrel{\eqref{eq:finite-obstruction}}{=}1.
		\]
		Moreover,
		\[
		\|\Theta_X(u)\|_{\cb}
		\stackrel{\eqref{eq:coordinate-transfer}}{\le}
		\max_{S\subsetneq[N]}\|w_S\|_h
		\stackrel{\eqref{eq:finite-obstruction}}{<}1.
		\]
		Hence norm preservation at length at most $\ell$ forces $(\ell^2+1)$-primality.
		
		Finally, suppose that $\Theta_X$ is isometric. The necessary implication in \textup{(ii)}, already proved, shows that every Glimm ideal of $A$ is $(\ell^2+1)$-primal for every $\ell\in\N$. Given $n\in\N$, $n\geq2$, choose $\ell\in\N$ with $n\le\ell^2+1$ and use the monotonicity of $n$-primality from Section~\ref{subsec:cstar-facts}. Every Glimm ideal is then $n$-primal for every $n\geq2$, and hence primal. This completes \textup{(iii)}.
	\end{proof}
	
	\section{Stabilized inverse constants}\label{sec:constants}
	
	For a nonzero $A$--$B$ imprimitivity bimodule $X={}_AX_B$, we compare the inverse constants associated with the coefficient maps of $M_n(X)$, $n\in\N$. We show that, at every finite tensor length and in the completed case, their supremum over $n\in\N$ agrees with the corresponding stabilized constants for $A$ and $B$ and with the corresponding constant for $X\otimes\mathcal K_0$.
	
	\subsection{Definitions and full-corner invariance}
	
	We first introduce the inverse constants and their stabilizations and record the basic properties needed below. We then obtain finite factorizations associated with full projections and use them to prove that, for a $C^*$-algebra $C$ and full projections $e,f\in\M(C)$, the stabilized inverse constants of $eCf$ coincide with those of $C$.
	
	Let $X={}_AX_B$ be a nonzero imprimitivity bimodule. For $r\in\N$, define the finite-length constant $L_r(X)$, the completed constant $L_\infty(X)$, and their stabilizations by
	\begin{equation}\label{eq:inverse-constants}
		\begin{aligned}
			L_r(X)&:=\sup
			\left\{\frac{\|u\|_{Z_X,h}}{\|\Theta_X(u)\|_{\cb}}: 0\ne u\in A\odot_{Z_X}B, \, \len(u)\le r  \right\},\\
			L_\infty(X)&:=\sup \left\{ 
			\frac{\|u\|_{Z_X,h}}{\|\Theta_X(u)\|_{\cb}}: 0\ne u\in A\otimes_{Z_X,h}B \right\},\\
			L_t^{\mathrm{st}}(X)&:=\sup_{n\in\N}L_t(M_n(X)),\qquad t\in\N\cup\{\infty\}.
		\end{aligned}
	\end{equation}
	A ratio with zero denominator is interpreted as $+\infty$.
	
	Accordingly, for a nonzero $C^*$-algebra $C$, regarded as the multiplication $C$--$C$ imprimitivity bimodule, $L_\infty(C)$ and $L_\infty^{\mathrm{st}}(C)$ are, respectively, the constants $L(C)$ and $L'(C)$ of \cite[Definition~4.1 and Remark~4.2]{ArchboldSomersetTimoney2009}. Realize $\mathcal K_0$ as $\mathcal K(H_0)$ for a fixed separable infinite-dimensional Hilbert space $H_0$. Archbold, Somerset and Timoney proved that $L_\infty^{\mathrm{st}}(C)$ depends only on the homeomorphism class of $\Prim(C)$ and that
	\[
	L_\infty^{\mathrm{st}}(C)=L_\infty(C\otimes\mathcal K_0);
	\]
	see \cite[Theorem~6.2 and Corollary~6.3]{ArchboldSomersetTimoney2009}. Thus the homeomorphism $\Prim(A)\cong\Prim(B)$ induced by the Rieffel correspondence and \cite[Theorem~6.2]{ArchboldSomersetTimoney2009} already give $L_\infty^{\mathrm{st}}(A)=L_\infty^{\mathrm{st}}(B)$. We do not use a finite-length analogue of Theorem~6.2: the comparison of $X$ with $A$ and $B$, and all finite-length stabilization identities, are proved directly in Theorem~\ref{thm:full-corner-constants} and Corollary~\ref{cor:constants}.
	
	The defining sets are nonempty. Indeed, if $P\in\Prim(A)$, $a\in A\setminus P$ and $b\in B\setminus P^X$, then 
	\[
	q_P(a\otimes_{Z_X}b)\stackrel{\eqref{eq:balanced-quotient-X}}{=}
	(a+P)\otimes(b+P^X)\ne 0.
	\]
	Since $\Theta_X$ is completely contractive, it follows that
	$L_r(X),L_\infty(X)\in[1,\infty]$. A finite value is exactly the optimal constant in the corresponding reverse norm estimate. Moreover,
	\begin{equation}\label{eq:density-constants}
		L_\infty(X)=\sup_{r\in\N}L_r(X) \qquad \text{and} \qquad
		L_\infty^{\mathrm{st}}(X)=\sup_{r\in\N}L_r^{\mathrm{st}}(X).
	\end{equation}
	Indeed, $L_r(X)\le L_\infty(X)$ for every $r\in\N$. Conversely, suppose that
	$\lambda:=\sup_{r\in\N}L_r(X)<\infty$. Since every
	$u\in A\odot_{Z_X}B$ has finite length,
	\[
	\|u\|_{Z_X,h}\le\lambda\|\Theta_X(u)\|_{\cb},
	\qquad u\in A\odot_{Z_X}B.
	\]
	By density and continuity, the same estimate holds for all
	$u\in A\otimes_{Z_X,h}B$, and hence $L_\infty(X)\le\lambda$. If
	$\sup_{r\in\N}L_r(X)=\infty$, the first equality follows immediately from
	$L_r(X)\le L_\infty(X)$ for every $r\in\N$. Applying the first equality to
	$M_n(X)$ for every $n\in\N$ and interchanging the suprema over $n$ and $r$
	proves the second.
	
	The following finite-factorization lemma is based on Brown's theorem~\cite[Theorem~2.1]{Brown1977}.
	
	\begin{lemma}\label{lem:finite-factorization}
		Let $C$ be a $C^*$-algebra and $e\in\M(C)$ a full projection. For a finite set $\mathcal F\subseteq C$ and $\varepsilon>0$, there are $d\in\N$ and a contraction $R\in R_d(Ce)$ such that
		\begin{align*}
			\phi_R:C\to M_d(eCe),\qquad & \phi_R(c):=R^*cR, \\
			\psi_R:M_d(eCe)\to C,\qquad & \psi_R(T):=RTR^*
		\end{align*}
		are complete contractions respecting the left and right $Z_C$-actions and satisfy
		\[
		\|c-(\psi_R\circ\phi_R)(c)\|<\varepsilon,
		\qquad c\in\mathcal F.
		\]
		Here the $Z_C$-actions on $M_d(eCe)$ are induced by $z\mapsto(eze)I_d$, $z\in Z_C$.
	\end{lemma}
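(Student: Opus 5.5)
Since $e$ is full, $\cspan(CeC)=C$, so elements of the form $\sum_{k=1}^d x_k y_k^*$ with $x_k,y_k\in Ce$ are dense in $C$. This is the Morita-theoretic content of Brown's theorem~\cite[Theorem~2.1]{Brown1977}: $Ce$ is a $C$--$eCe$ imprimitivity bimodule, so $C\cong\mathcal K_{eCe}(Ce)$. Because $C$ is the compact-operator algebra of the Hilbert $eCe$-module $Ce$, it has an approximate unit consisting of elements of the form $\sum_{k=1}^d x_k x_k^*$ with $x_k\in Ce$, and these elements are contractions. My plan is to choose such an element $h=RR^*$, where $R=[x_1,\dots,x_d]\in R_d(Ce)$, so that $\|c-hch\|<\varepsilon$ for all $c\in\mathcal F$. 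Here $\|R\|^2=\|RR^*\|=\|h\|\le1$.

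The approximate unit can be obtained in several ways. One option is the standard construction for $\mathcal K(Y)$, in the style of \cite[Lemma~2.3]{RaeburnWilliams1998}: take finite sums $\sum\vartheta_{x_k,x_k}$ and normalize them by functional calculus. Another is to apply Brown's theorem directly in its form $\sum x_kx_k^*\nearrow 1$ strictly; the required finite approximation then comes from truncating and using $\|c - hch\|\le \|c-hc\|+\|hc-hch\|$. In either case, if the approximate unit is built as $f(\sum y_ky_k^*)$ for a continuous function $f$, I will rewrite $f(\sum y_k y_k^*)$ as $R R^*$ with a new row $R$. This works via the identity $f(YY^*)=Y g(Y^*Y) Y^*$ with $g(t)=f(t)/t$, applied to $Y\in R_d(Ce)$, so that $R:=Y g(Y^*Y)^{1/2}$. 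The point is that $Y^*Y\in M_d(eCe)$, so $R\in R_d(Ce)$ again. I expect this rewriting, which keeps the approximate unit in the factored form $RR^*$ while preserving contractivity, to be the main technical point.

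With $R$ in hand, the maps are defined as in the statement. First, $\phi_R(c)=R^*cR$ has entries $x_i^*cx_j\in eCe$, so $\phi_R$ maps into $M_d(eCe)$. Second, $\psi_R(T)=RTR^*\in CeCeC\subseteq C$. Both maps are compressions by the contraction $R$ inside the $C^*$-algebra $M_{d+1}(\M(C))$, for example in linking form, and are therefore completely contractive. Their composite is $\psi_R\phi_R(c)=hch$, which gives the approximation $\|c-(\psi_R\circ\phi_R)(c)\|<\varepsilon$ on $\mathcal F$.

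It remains to check that both maps respect the $Z_C$-actions. For $z\in Z_C$, the element $z$ commutes with every entry and $x_k e=x_k$, so
\[
R^*(zc)R=[x_i^*zcx_j]=[(eze)x_i^*cx_j]=(eze)I_d\,\phi_R(c).
\]
Here I use $x_i^*z=x_i^*ez=ex_i^*ze=\dots$; the identity holds because $z$ is central, so $x_i^*zcx_j=e z e\, x_i^*cx_j$, using $e x_i^*=x_i^*$. The right action is handled in the same way. For $\psi_R$ I use $R\,(eze)I_d\,T R^*=zRTR^*$ together with $x_k eze=zx_k$. These are routine computations.
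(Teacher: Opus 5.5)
Your proposal is correct and follows essentially the paper's route: take a contraction $h=RR^*$ from an approximate unit of $C$, with $R$ a finite row over $Ce$, compress by $R$, and use $\|c-hch\|\le\|c-hc\|+\|c-ch\|$. The paper reads off such $h=\sum_j x_j^*x_j$ with $x_j\in eC$ directly from Brown's theorem, so your rewriting $f(YY^*)=Y\,g(Y^*Y)\,Y^*$ (with $g(Y^*Y)$ taken in the unitization of $M_d(eCe)$, where it is valid) is needed only for your first option; also, your intermediate step $x_i^*z=x_i^*ez$ is false in general, since $x_i\in Ce$ gives $x_i^*=ex_i^*$ rather than $x_i^*e$, but the justification you then give, $x_i^*zcx_j=(eze)\,x_i^*cx_j$ via centrality of $z$ and $ex_i^*=x_i^*$, is the right one.
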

	
	\begin{proof}
		If $C=0$ or $\mathcal F=\emptyset$, take $d:=1$ and $R:=0$. Otherwise, fullness of $e$ means that the right ideal $eC$ generates $C$ as a closed two-sided ideal. By \cite[Theorem~2.1 and its proof]{Brown1977}, applied to $eC$, there exist a directed set $\Lambda$ and an increasing approximate identity $(g_\lambda)_{\lambda\in\Lambda}$ of positive contractions in $C$, each of which is a finite sum of elements $x^*x$ with $x\in eC$. Choose $\lambda\in\Lambda$ such that 
		\[
		\|c-g_\lambda c\|< \frac{\varepsilon}{2} \qquad \text{and} \qquad \|c-cg_\lambda\|<\frac{\varepsilon}{2}, \qquad  c\in\mathcal F,
		\]
		and write
		\[
		g:=g_\lambda=\sum_{j=1}^d x_j^*x_j, \qquad d\in\N, \quad x_1, \ldots , x_d \in eC.
		\]
		Put $R:=[x_1^*\ \cdots\ x_d^*]\in R_d(Ce)$. Then $RR^*=g$, so $\|R\|^2=\|RR^*\|=\|g\|\le1$. Since $(\psi_R\circ\phi_R)(c)=gcg$ for $c\in C$,
		\[
		\|c-(\psi_R\circ\phi_R)(c)\|
		\le\|c-gc\|+\|g\|\,\|c-cg\|<\varepsilon,
		\qquad c\in\mathcal F.
		\]
		The maps $\phi_R$ and $\psi_R$ are completely contractive by multiplication with $R$ and $R^*$.
		
		If $z\in Z_C$, then, since $x_j\in eC$ for $1\le j\le d$, centrality of $z$ gives
		\[
		R(eze)I_d=zR \qquad\text{and}\qquad (eze)I_dR^*=R^*z.
		\]
		Hence, for $c\in C$ and $T\in M_d(eCe)$,
		\begin{align*}
			\phi_R(zc)&=(eze)I_d\phi_R(c), \qquad
			\phi_R(cz)=\phi_R(c)(eze)I_d,\\
			\psi_R((eze)I_dT)&=z\psi_R(T), \qquad
			\psi_R(T(eze)I_d)=\psi_R(T)z.
		\end{align*}
		Thus both maps respect the left and right $Z_C$-actions.
	\end{proof}
	
	\begin{theorem}\label{thm:full-corner-constants}
		Let $C$ be a nonzero $C^*$-algebra, let $e,f\in\M(C)$ be full projections, and let $Y:=eCf$, considered as an $eCe$--$fCf$ imprimitivity bimodule. Then, for every $r\in\N\cup\{\infty\}$,
		\[
		L_r^{\mathrm{st}}(Y)=L_r^{\mathrm{st}}(C).
		\]
	\end{theorem}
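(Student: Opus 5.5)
We prove the two inequalities $L_r^{\mathrm{st}}(Y)\le L_r^{\mathrm{st}}(C)$ and $L_r^{\mathrm{st}}(C)\le L_r^{\mathrm{st}}(Y)$ separately. By \eqref{eq:density-constants} it suffices to treat finite $r\in\N$; the case $r=\infty$ then follows by taking suprema over $r$.

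\emph{The inequality $L_r^{\mathrm{st}}(Y)\le L_r^{\mathrm{st}}(C)$.} Fix $n\in\N$. The matrix bimodule $M_n(Y)=E_nM_n(C)F_n$, with $E_n:=eI_n$ and $F_n:=fI_n$, is a full rectangular corner of $M_n(C)$. Lemma~\ref{lem:corner-calculus}, applied to $M_n(C)$, $E_n$ and $F_n$, gives a length-preserving isometry $j$ with $\|\Theta_{M_n(Y)}(u)\|_{\cb}=\|\theta_{M_n(C)}(j(u))\|_{\cb}$. Hence every ratio appearing in the definition of $L_r(M_n(Y))$ also appears in the definition of $L_r(M_n(C))$, so
\[
L_r(M_n(Y))\le L_r(M_n(C))\le L_r^{\mathrm{st}}(C).
\]

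\emph{The inequality $L_r^{\mathrm{st}}(C)\le L_r^{\mathrm{st}}(Y)$.} Replacing $C$ by $M_n(C)$, with $e$ and $f$ replaced by $eI_n$ and $fI_n$, reduces this to proving $L_r(C)\le L_r^{\mathrm{st}}(Y)$. Fix $u=\sum_{k=1}^r a_k\otimes_Z b_k\in C\odot_Z C$ and $\varepsilon>0$. Apply Lemma~\ref{lem:finite-factorization} twice: once with the projection $e$ and the finite set $\{a_k\}$, giving $R\in R_d(Ce)$, and once with $f$ and the set $\{b_k\}$, giving $S\in R_{d'}(Cf)$. We may pad with zeros so that $d=d'$. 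Set
\[
v:=\sum_k \phi_R(a_k)\otimes_Z \phi_S(b_k)\in M_d(eCe)\odot_Z M_d(fCf).
\]
Then $\len(v)\le r$. Moreover, $\psi_R\otimes_{Z,h}\psi_S$ is a complete contraction by \eqref{eq:module-haagerup-functoriality}, since both maps respect the $Z$-actions, and it maps $v$ to within $O(\varepsilon)$ of $u$ in $\|\cdot\|_{Z,h}$. Therefore $\|u\|_{Z,h}\le\|v\|_{Z,h}+O(\varepsilon)$, where the constant depends on $\sum_k\|a_k\|\|b_k\|$ and is harmless.

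It remains to bound $\|\Theta_{M_d(Y)}(v)\|_{\cb}\le\|\theta_C(u)\|_{\cb}$, and this is the step I expect to be the main obstacle: one has to arrange the coefficient operator on $M_d(Y)$ as a compression of $\theta_C(u)$. For $T\in M_d(Y)$,
\[
\Theta_{M_d(Y)}(v)(T)=\sum_k R^*a_kR\,T\,S^*b_kS=R^*\,\theta_C(u)(RTS^*)\,S.
\]
The map $T\mapsto RTS^*$ sends $M_d(Y)$ into $C$, because $R\in R_d(Ce)$ and $S^*\in C_d(fC)$. Both this map and the outer compression $c\mapsto R^*cS$ are complete contractions, since $\|R\|,\|S\|\le1$, and the compression lands in $M_d(eCf)=M_d(Y)$. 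Consequently $\|\Theta_{M_d(Y)}(v)\|_{\cb}\le\|\theta_C(u)\|_{\cb}$.

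Combining the two estimates gives
\[
\|u\|_{Z,h}\le L_r(M_d(Y))\,\|\theta_C(u)\|_{\cb}+O(\varepsilon).
\]
This is immediate when $v\ne0$. If $v=0$, the Haagerup estimate above gives $\|u\|_{Z,h}=O(\varepsilon)$, so the bound still holds. Letting $\varepsilon\to0$ yields $\|u\|_{Z,h}\le L_r^{\mathrm{st}}(Y)\,\|\theta_C(u)\|_{\cb}$, and hence $L_r(C)\le L_r^{\mathrm{st}}(Y)$. When $L_r^{\mathrm{st}}(Y)=\infty$ there is nothing to prove, and when $\theta_C(u)=0$ the estimate forces $u=0$, which is consistent with the $+\infty$ convention for zero denominators.
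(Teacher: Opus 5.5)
Your proposal is correct and follows the paper's proof. It reduces to finite $r$ via \eqref{eq:density-constants}, uses the corner embedding of Lemma~\ref{lem:corner-calculus} in $M_n(C)$ for $L_r^{\mathrm{st}}(Y)\le L_r^{\mathrm{st}}(C)$, and uses the factorization of Lemma~\ref{lem:finite-factorization} with the compression identity $\Theta_{M_d(Y)}(v)(T)=R^*\theta_C(u)(RTS^*)S$ for the reverse inequality. One step is left implicit: your reduction from $M_n(C)$ gives $L_r(M_n(C))\le L_r^{\mathrm{st}}(M_n(Y))$, and this still has to be bounded by $L_r^{\mathrm{st}}(Y)$ through the regrouping $M_d(M_n(Y))\cong M_{dn}(Y)$, which the paper states explicitly.
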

	
	\begin{proof}
		By \eqref{eq:density-constants}, it suffices to take $r\in\N$. Put $Z:=Z_C$, identifying the corner centres through \eqref{eq:corner-centre}. We first prove $L_r(C)\le L_r^{\mathrm{st}}(Y)$; it suffices to consider the case $\lambda:=L_r^{\mathrm{st}}(Y)<\infty$. Take
		$u=\sum_{j=1}^r a_j\otimes_Z b_j\in C\odot_Z C$, with $a_j,b_j\in C$ for $1\le j\le r$, allowing zero terms, and let $\varepsilon>0$. Set
		\begin{equation}\label{eq:eta}
			\eta:=\frac{\varepsilon}{1+\sum_{j=1}^r(\|a_j\|+\|b_j\|)}.
		\end{equation}
		Apply Lemma~\ref{lem:finite-factorization} to $\{a_1,\ldots,a_r\}$ with the full projection $e$, and to $\{b_1,\ldots,b_r\}$ with the full projection $f$, using $\eta$ in place of $\varepsilon$ in both applications. After adjoining zero entries to the shorter of the two rows, if necessary, we may assume that they have the same length $d\in\N$; thus $R\in R_{d}(Ce)$ and $S\in R_{d}(Cf)$.
		
		Define
		\[
		v:=\sum_{j=1}^r(R^*a_jR)\otimes_Z(S^*b_jS)
		\in M_d(eCe)\odot_Z M_d(fCf),
		\]
		where $z\in Z$ acts by $(eze)I_d$ and $(fzf)I_d$ in the two factors. The displayed representation has at most $r$ elementary terms in the coefficient algebras of $M_d(Y)$, so $\len(v)\le r$. By \eqref{eq:module-haagerup-functoriality},
		$\psi_R\otimes_{Z,h}\psi_S$ is completely contractive. Expanding each tensor difference and using the Haagerup cross-norm inequality and $\|(\psi_R\circ\phi_R)(a_j)\|\le\|a_j\|$, we obtain
		\begin{align*}
			\left\|u-(\psi_R\otimes_{Z,h}\psi_S)(v)\right\|_{Z,h}
			&\le\sum_{j=1}^r
			\|a_j-(\psi_R\circ\phi_R)(a_j)\|\,\|b_j\|+\sum_{j=1}^r \|a_j\|\,\|b_j-(\psi_S\circ\phi_S)(b_j)\|\\
			&\stackrel{\text{Lemma}~\ref{lem:finite-factorization}}{\le} \eta \, \sum_{j=1}^r(\|a_j\|+\|b_j\|) \stackrel{\eqref{eq:eta}}{<}\varepsilon.
		\end{align*}
		Consequently,
		\begin{equation}\label{eq:norm-recovery}
			\|u\|_{Z,h}\le\varepsilon+\|v\|_{Z,h}.
		\end{equation}
		For $W\in M_d(Y)$, multiplication gives
		\begin{equation}\label{eq:block-comparison}
			\Theta_{M_d(Y)}(v)(W)=\sum_{j=1}^r(R^*a_jR)W(S^*b_jS)=R^*\theta_C(u)(RWS^*)S.
		\end{equation}
		Both maps 
		\[
		M_d(Y)\to C, \qquad W\mapsto RWS^*, \qquad \text{and} \qquad C\to M_d(Y), \qquad c\mapsto R^*cS,
		\]
		are complete contractions. Hence \eqref{eq:block-comparison} gives $\|\Theta_{M_d(Y)}(v)\|_{\cb}\le\|\theta_C(u)\|_{\cb}$. Since $L_r(M_d(Y))\le\lambda<\infty$, \eqref{eq:inverse-constants} gives $\|v\|_{Z,h}\le\lambda\|\Theta_{M_d(Y)}(v)\|_{\cb}$ for $v\ne 0$, while the same inequality is immediate for $v=0$. Thus
		\[
		\|u\|_{Z,h}\stackrel{\eqref{eq:norm-recovery}}{\le}
		\varepsilon+\|v\|_{Z,h}\stackrel{\eqref{eq:inverse-constants}}{\le}\varepsilon+\lambda\|\Theta_{M_d(Y)}(v)\|_{\cb}
		\stackrel{\eqref{eq:block-comparison}}{\le}
		\varepsilon+\lambda\|\theta_C(u)\|_{\cb}.
		\]
		Since $u\in C\odot_Z C$ of length at most $r$ and $\varepsilon >0$ were arbitrary, it follows that $L_r(C)\le L_r^{\mathrm{st}}(Y)$.
		
		For $m\in\N$, apply the inequality just proved to $M_m(C)$ and its full projections $I_m\otimes e$ and $I_m\otimes f$. Their rectangular corner is $M_m(Y)$. For every $d\in\N$, the canonical matrix identification
		\cite[1.2.12]{BlecherLeMerdy2004}, obtained by grouping the indices, identifies $M_d(M_m(Y))$ completely isometrically with $M_{dm}(Y)$ and similarly identifies the corresponding coefficient algebras and $Z$-actions. Under these identifications, the central Haagerup norm and algebraic tensor length are preserved and the coefficient maps are intertwined, so
		\[
		L_r(M_d(M_m(Y)))=L_r(M_{dm}(Y)).
		\]
		Consequently,
		\begin{align*}
			L_r(M_m(C))&\le L_r^{\mathrm{st}}(M_m(Y))\stackrel{\eqref{eq:inverse-constants}}{=}
			\sup_{d\in\N}L_r(M_d(M_m(Y)))=\sup_{d\in\N}L_r(M_{dm}(Y))\\
			&\le L_r^{\mathrm{st}}(Y).
		\end{align*}
		Taking the supremum over $m\in\N$ gives $L_r^{\mathrm{st}}(C)\le L_r^{\mathrm{st}}(Y)$. Conversely, Lemma~\ref{lem:corner-calculus} in $M_m(C)$ preserves tensor norm, length and coefficient-map norm, so $L_r(M_m(Y))\le L_r(M_m(C))$ for every $m\in\N$. Taking the supremum over $m\in\N$ proves $L_r^{\mathrm{st}}(Y)\le L_r^{\mathrm{st}}(C)$.
	\end{proof}
	
	\subsection{Compact stabilization}
	
	We now pass from matrix amplifications to compact stabilization. Using the finite-rank matrix corners of $\mathcal K_0$, we prove that the inverse constants of $X\otimes\mathcal K_0$ coincide with the stabilized inverse constants of $X$ and hence with those of its two coefficient algebras.
	
	For a nonzero imprimitivity bimodule $X={}_AX_B$ with linking algebra $D:=\Link(X)$ and canonical linking projections $p,q\in\M(D)$, define
	\[
	X\otimes\mathcal K_0:=(p\otimes1)(D\otimes\mathcal K_0)(q\otimes1),
	\]
	where $1$ is the identity of $\M(\mathcal K_0)$. This is the \emph{external tensor product imprimitivity bimodule} with coefficient algebras $A\otimes\mathcal K_0$ and $B\otimes\mathcal K_0$; see \cite[Proposition~3.36]{RaeburnWilliams1998}. Its canonical operator-space structure is the one inherited from the displayed corner of $D\otimes\mathcal K_0$.
	
	\begin{corollary}\label{cor:constants}
		For a nonzero imprimitivity bimodule $X={}_AX_B$ and every $r\in\N\cup\{\infty\}$,
		\begin{equation}\label{eq:stabilized-constants}
			L_r(X\otimes\mathcal K_0)=L_r^{\mathrm{st}}(X)
			=L_r^{\mathrm{st}}(A)=L_r^{\mathrm{st}}(B).
		\end{equation}
	\end{corollary}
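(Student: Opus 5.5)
The plan is to reduce everything to Theorem~\ref{thm:full-corner-constants}, applied several times. The only extra ingredient is the absorption $\mathcal K_0\otimes M_n(\C)\cong\mathcal K_0$.

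\emph{Step 1: $A$, $B$ and $X$.} Put $D:=\Link(X)$, with linking projections $p,q$. Apply Theorem~\ref{thm:full-corner-constants} with $C=D$ three times:
\begin{itemize}
\item with $e=p$, $f=q$, which gives $L_r^{\mathrm{st}}(X)=L_r^{\mathrm{st}}(D)$;
\item with $e=f=p$, whose rectangular corner is $pDp=A$ regarded as the multiplication $A$--$A$ bimodule, which gives $L_r^{\mathrm{st}}(A)=L_r^{\mathrm{st}}(D)$;
\item with $e=f=q$, which gives $L_r^{\mathrm{st}}(B)=L_r^{\mathrm{st}}(D)$.
\end{itemize}
In the second and third cases I must check that the corner bimodule structure and the centre identification \eqref{eq:corner-centre} match those used to define $L_r(M_n(A))$ and $L_r(M_n(B))$. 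This is immediate, because the inner products on $pDp$ are $xy^*$ and $x^*y$. This proves the last two equalities in \eqref{eq:stabilized-constants}.

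\emph{Step 2: $D\otimes\mathcal K_0$ against $D$.} Put $\widetilde D:=D\otimes\mathcal K_0$. Apply Theorem~\ref{thm:full-corner-constants} with $C=\widetilde D$ and $e=f=p\otimes1$, $f'=q\otimes1$; both projections are full since $p,q$ are full in $D$. This gives $L_r^{\mathrm{st}}(X\otimes\mathcal K_0)=L_r^{\mathrm{st}}(\widetilde D)$.

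Next fix $n\in\N$ and a rank-$n$ projection $g_n\in\mathcal K_0$. The projection $1\otimes g_n\in\M(\widetilde D)$ is full, because $\mathcal K_0$ is simple, so $\cspan(\mathcal K_0g_n\mathcal K_0)=\mathcal K_0$. Its corner is $D\otimes g_n\mathcal K_0g_n\cong M_n(D)$. Theorem~\ref{thm:full-corner-constants} with $e=f=1\otimes g_n$ then gives $L_r^{\mathrm{st}}(\widetilde D)=L_r^{\mathrm{st}}(M_n(D))$. By the index-grouping identification used in the proof of Theorem~\ref{thm:full-corner-constants}, the latter equals $\sup_dL_r(M_{dn}(D))=L_r^{\mathrm{st}}(D)$. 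Combining with Step 1 yields $L_r^{\mathrm{st}}(X\otimes\mathcal K_0)=L_r^{\mathrm{st}}(X)$.

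\emph{Step 3: removing the stabilization for $X\otimes\mathcal K_0$.} It remains to prove $L_r(X\otimes\mathcal K_0)=L_r^{\mathrm{st}}(X\otimes\mathcal K_0)$. The inequality $\le$ is trivial by taking $n=1$.

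For $\ge$, choose a unitary $H_0^{\oplus n}\to H_0$. It induces a $*$-isomorphism $\sigma:M_n(\widetilde D)\cong D\otimes M_n(\mathcal K_0)\to\widetilde D$ carrying $(p\otimes1)I_n$ to $p\otimes1$ and $(q\otimes1)I_n$ to $q\otimes1$. Hence $\sigma$ restricts to complete isometries
\[
M_n(X\otimes\mathcal K_0)\to X\otimes\mathcal K_0,\qquad M_n(A\otimes\mathcal K_0)\to A\otimes\mathcal K_0,\qquad M_n(B\otimes\mathcal K_0)\to B\otimes\mathcal K_0,
\]
compatible with the module actions. These restrictions intertwine the multiplier centres, since $\sigma$ extends to multiplier algebras and maps centre to centre. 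By \eqref{eq:module-haagerup-functoriality}, applied to $\sigma$ and $\sigma^{-1}$, they induce a complete isometry of the central Haagerup tensor products. This isometry preserves tensor length, because it maps elementary tensors to elementary tensors in both directions, and it intertwines the coefficient maps up to conjugation by a complete isometry of the bimodules. Therefore $L_r(M_n(X\otimes\mathcal K_0))=L_r(X\otimes\mathcal K_0)$ for every $n$, and the case $r=\infty$ follows in the same way or from \eqref{eq:density-constants}.

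I expect Step 3 to be the main obstacle. The difficulty is to verify carefully that a $*$-isomorphism of linking algebras preserving the corner projections transports all the data in the definition of $L_r$: the operator-space structures, the centre identification and the $\cb$-norms. Each item is routine, but the compatibility of the centre identifications is where an error could slip in.
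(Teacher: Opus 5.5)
Your proof is correct. Step~1 coincides with the paper's first step. The difference is in how you get $L_r(X\otimes\mathcal K_0)=L_r^{\mathrm{st}}(X)$.

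\textbf{The paper's route.} It works directly with the finite-rank projections $p\otimes k_n$, $q\otimes k_n$ in $D\otimes\mathcal K_0$, whose rectangular corner is $M_n(X)$.
\begin{itemize}
\item The inequality $L_r^{\mathrm{st}}(X)\le L_r(X\otimes\mathcal K_0)$ comes from the length-preserving corner embeddings $J_n$ and \eqref{eq:corner-action-norm}, so only Lemma~\ref{lem:corner-calculus} is used.
\item The reverse inequality comes from compressing the coefficients of a length-$r$ tensor by $p\otimes k_n$, $q\otimes k_n$ and passing to the limit.
\end{itemize}

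\textbf{Your route.} You apply Theorem~\ref{thm:full-corner-constants} twice more inside $D\otimes\mathcal K_0$ to get $L_r^{\mathrm{st}}(X\otimes\mathcal K_0)=L_r^{\mathrm{st}}(D\otimes\mathcal K_0)=L_r^{\mathrm{st}}(D)$. You then remove the stabilization through $M_n(\mathcal K_0)\cong\mathcal K_0$, implemented by a unitary $H_0^{\oplus n}\to H_0$ that fixes $p\otimes1$ and $q\otimes1$.

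\textbf{Comparison.} Your route is more structural and avoids a second approximation argument; the paper's limit argument essentially re-runs the mechanism of Theorem~\ref{thm:full-corner-constants}, with the $k_n$ in place of Brown's approximate unit. It also gives, as a by-product, the intrinsic stability $L_r(M_n(X\otimes\mathcal K_0))=L_r(X\otimes\mathcal K_0)$. The price is the transport-of-structure check in Step~3 and invariance of $L_r$ under $*$-isomorphisms, which you need to identify $D\otimes g_n\mathcal K_0g_n$ with $M_n(D)$. Both are routine, and your treatment of the centre identification via the multiplier extension of $\sigma$ is correct.

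\textbf{Two small points.} In Step~2 it suffices to take $n=1$. For general $n$, the equality $\sup_dL_r(M_{dn}(D))=L_r^{\mathrm{st}}(D)$ needs the monotonicity $L_r(M_m(D))\le L_r(M_{mn}(D))$. This follows from Lemma~\ref{lem:corner-calculus} applied to the full corner $M_m(D)$ of $M_{mn}(D)$. Also, in the first application of Step~2 you mean $e=p\otimes1$ and $f=q\otimes1$.
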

	
	\begin{proof}
		Put $D:=\Link(X)$, with full linking projections $p,q\in\M(D)$. Applying Theorem~\ref{thm:full-corner-constants} to $(p,q)$, $(p,p)$ and $(q,q)$ gives $L_r^{\mathrm{st}}(X)=L_r^{\mathrm{st}}(A)=L_r^{\mathrm{st}}(B)$. By \eqref{eq:density-constants}, it remains to prove the first equality in \eqref{eq:stabilized-constants} for $r\in\N$.
		
		Fix an orthonormal basis of $H_0$, and let $k_n\in\mathcal K_0$ be the projection onto its first $n$ vectors for $n\in\N$. Put $C:=D\otimes\mathcal K_0$ and $X_0:=X\otimes\mathcal K_0$. The projections $p\otimes k_n,q\otimes k_n\in\M(C)$ are full and their rectangular corner $(p\otimes k_n)C(q\otimes k_n)$ is canonically identified with $M_n(X)$. Indeed, fullness follows from  
		\[
		\cspan(DpD)=\cspan(DqD)=D \qquad \text{and} \qquad \cspan(\mathcal K_0k_n\mathcal K_0)=\mathcal K_0.
		\]
		The projections $p\otimes1$ and $q\otimes1$ are full for the same reason. By \eqref{eq:corner-centre}, compression identifies the multiplier centres of $M_n(A)$ and $M_n(B)$, $n\in\N$, and of $A\otimes\mathcal K_0$ and $B\otimes\mathcal K_0$ with $Z:=Z_C$.
		
		By \eqref{eq:module-haagerup-functoriality}, the coefficient inclusions induce a complete contraction
		\[
		J_n:M_n(A)\otimes_{Z,h}M_n(B)
		\to (A\otimes\mathcal K_0)\otimes_{Z,h}(B\otimes\mathcal K_0).
		\]
		On each elementary tensor $a\otimes_Z b$, with $a\in M_n(A)$ and $b\in M_n(B)$, the composition of $J_n$ with the corner embedding of its codomain into $C\otimes_{Z,h}C$ agrees with the corner embedding $M_n(A)\otimes_{Z,h}M_n(B)\to C\otimes_{Z,h}C$ from
		Lemma~\ref{lem:corner-calculus}. Since the linear span of such tensors is dense
		in $M_n(A)\otimes_{Z,h}M_n(B)$, the two maps coincide. Both corner embeddings are completely isometric and preserve algebraic length by that lemma, so the same is true of $J_n$. For $v\in M_n(A)\otimes_{Z,h}M_n(B)$, applying \eqref{eq:corner-action-norm} to the corners
		$(p\otimes k_n)C(q\otimes k_n)$ and $(p\otimes1)C(q\otimes1)$ gives
		\[
		\|\Theta_{M_n(X)}(v)\|_{\cb}=\|\Theta_{X_0}(J_n(v))\|_{\cb}.
		\] 
		Hence $L_r(M_n(X))\le L_r(X_0)$, giving $L_r^{\mathrm{st}}(X)\le L_r(X_0)$.
		
		For the reverse inequality, suppose $\lambda:=L_r^{\mathrm{st}}(X)<\infty$ and take
		\[
		u=\sum_{j=1}^r a_j\otimes_Z b_j\in (A\otimes\mathcal K_0)\odot_Z(B\otimes\mathcal K_0),
		\]
		with $a_j\in A\otimes\mathcal K_0$ and $b_j\in B\otimes\mathcal K_0$ for
		$1\le j\le r$. Allowing zero terms covers every tensor of length at most $r$. For $n\in\N$ and $1\le j\le r$, put
		\[
		a_j^{(n)}:=(p\otimes k_n)a_j(p\otimes k_n),\qquad
		b_j^{(n)}:=(q\otimes k_n)b_j(q\otimes k_n),\qquad
		v_n:=\sum_{j=1}^r a_j^{(n)}\otimes_Z b_j^{(n)}.
		\]
		Here $v_n$ is regarded as a tensor over the coefficient algebras of $M_n(X)$. For each $t\in\mathcal K_0$, $k_ntk_n\to t$ in norm. By density of $D\odot\mathcal K_0$ in $C$ and contractivity of the compressions, $(1_{\M(D)}\otimes k_n)c(1_{\M(D)}\otimes k_n)\to c$ in norm for every $c\in C$. Since $a_j$ and $b_j$ belong to the respective $p$- and $q$-corners, it follows that $a_j^{(n)}\to a_j$ and $b_j^{(n)}\to b_j$ for $1\le j\le r$. The cross-norm inequality, applied to the difference of each elementary term, gives $J_n(v_n)\to u$ in the central Haagerup norm. Complete contractivity also gives $\Theta_{X_0}(J_n(v_n))\to\Theta_{X_0}(u)$ in completely bounded norm. Since $\len(v_n)\le r$,
		\[
		\|J_n(v_n)\|_{Z,h}=\|v_n\|_{Z,h}
		\stackrel{\eqref{eq:inverse-constants}}{\le}\lambda\|\Theta_{M_n(X)}(v_n)\|_{\cb}
		\stackrel{\eqref{eq:corner-action-norm}}{=}\lambda\|\Theta_{X_0}(J_n(v_n))\|_{\cb}.
		\]
		Passing to the limit proves $\|u\|_{Z,h}\le\lambda\|\Theta_{X_0}(u)\|_{\cb}$ and hence $L_r(X_0)\le L_r^{\mathrm{st}}(X)$. If $L_r^{\mathrm{st}}(X)=\infty$, then
		$L_r^{\mathrm{st}}(X)\le L_r(X_0)$ forces $L_r(X_0)=\infty$. This completes the proof.
	\end{proof}
	
	For the completed constant, matrix stabilization cannot in general be omitted:
	\cite[Corollary~7.14]{ArchboldSomersetTimoney2009} gives a $C^*$-algebra $C$
	for which $L_\infty(C)\le2$ while $L_\infty^{\mathrm{st}}(C)=\infty$.
	
	We conclude by explaining how the injectivity, norm-preservation and stabilization results apply to TROs and Hilbert $C^*$-modules, and why the nonunital theory does not extend in general to multiplier coefficients.
	
	\begin{remark}\label{rem:scope}
		Let $H$ and $K$ be Hilbert spaces. A nonzero \emph{ternary ring of operators} (TRO) is a norm-closed subspace $V\subseteq\mathcal B(H,K)$ satisfying $VV^*V\subseteq V$. Then 
		\[
		A_V:=\cspan(VV^*) \qquad \text{and} \qquad B_V:=\cspan(V^*V)
		\]
		are $C^*$-algebras, and $V$ is naturally an $A_V$--$B_V$ imprimitivity bimodule with inner products ${}_{A_V}\langle x,y\rangle:=xy^*$ and $\langle x,y\rangle_{B_V}:=x^*y$ for $x,y\in V$. Its operator-space structure is the one inherited from $\mathcal B(H,K)$. Similarly, if $B$ is a $C^*$-algebra and $Y$ is a nonzero right Hilbert $B$-module, then $Y$ is an imprimitivity bimodule between $\mathcal K_B(Y)$ and $B_Y:=\cspan\langle Y,Y\rangle_B$; see \cite[8.1.2, 8.1.4 and 8.1.14]{BlecherLeMerdy2004}. Thus the results above apply directly to TROs and Hilbert $C^*$-modules with these coefficient algebras.
		
		For TROs and their linking $C^*$-algebras, preservation of primal and Glimm ideals under the ideal correspondence is established in \cite[Propositions~3.7 and~3.11]{RajpalKansal2026}; compare Lemma~\ref{lem:morita-ideals}. Embeddings and ideal structure of the ordinary Haagerup tensor product $V\otimes_h C$ of a TRO $V$ and a $C^*$-algebra $C$ have been studied in \cite{KansalKumar2023,KansalKumarIdeals2023,KansalKumar2026,RajpalKansal2026}. Our TRO application concerns a different object: $V$ is viewed as an imprimitivity bimodule between $A_V$ and $B_V$, with $Z_V$ denoting their common multiplier centre, and the relevant coefficient map is defined on the central Haagerup tensor product $A_V\otimes_{Z_V,h}B_V$.
		
		In the nonunital case, the distinction between the canonical coefficient algebras and their multiplier algebras is essential in general. Indeed, \cite[Example~3.10]{ArchboldSomersetTimoney2009} gives a nonunital $C^*$-algebra $C$ for which every Glimm ideal is primal, while the multiplier-coefficient map
		\[
		\M(C)\otimes_{Z_C,h}\M(C)\to\CB(C),\qquad
		a\otimes_{Z_C}b\mapsto(x\mapsto axb),
		\qquad a,b\in\M(C), \quad x\in C,
		\]
		is not injective. Hence the Glimm-ideal criteria in Theorem~\ref{thm:main} do not, in general, remain valid after passing from the canonical coefficient algebras to their multiplier algebras. For further results relating the primitive ideal space of $\M(C)$ to norms of elementary operators on a separable $C^*$-algebra $C$, see \cite{ArchboldSomerset2014}.
	\end{remark}
	
	\subsection*{Funding}
	This research was supported by the European Union -- NextGenerationEU through the National Recovery and Resilience Plan 2021--2026, Institutional Grants of the University of Zagreb Faculty of Science (IK IA 1.1.3. Impact4Math, PMF-CROFUND).
	
	\subsection*{AI-use disclosure}
	OpenAI GPT-5.6 Sol and GPT-6 Astra were used during the preparation of this manuscript for literature searches, proofreading, copyediting, and improvement of the mathematical exposition. A subsequent review with GPT-6 Astra also led to further improvements in several proofs: some intermediate steps were made more explicit, and in a few places the arguments were recast in a cleaner and more direct form, notably in the complete-isometry and stabilization parts of the paper. The author independently checked all resulting changes, together with the mathematical arguments and references, and takes full responsibility for the final content of the manuscript.

\end{document}